\numberwithin{equation}{section}
\numberwithin{figure}{section}
  \theoremstyle{plain}
  \newtheorem*{claim*}{\protect\claimname}
\theoremstyle{plain}
\newtheorem{thm}{\protect\theoremname}
  \theoremstyle{plain}
  \newtheorem{cor}{\protect\corollaryname}
 \theoremstyle{definition}
 \newtheorem*{defn*}{\protect\definitionname}
  \theoremstyle{plain}
  \newtheorem{prop}{\protect\propositionname}
  \theoremstyle{remark}
  \newtheorem*{rem*}{\protect\remarkname}
\titleformat{\section}{\large\sc\center}{\thesection}{1em}{}
  \providecommand{\claimname}{Claim}
  \providecommand{\definitionname}{Definition}
  \providecommand{\propositionname}{Proposition}
  \providecommand{\remarkname}{Remark}
\providecommand{\corollaryname}{Corollary}
\providecommand{\theoremname}{Theorem}
\begin{document}

\title{Looking Backward and Looking Forward%
\thanks{The authors would like to express their gratitude to Ken Judd and
Peter C. B. Phillips for useful discussions at the early stage of
this manuscript, and the participants in the seminars at University of
Amsterdam. All the remaining errors are ours.%
}}

\author{Zhengyuan Gao%
\thanks{
Center for Operations Research and Econometrics (CORE), Universite
catholique de Louvain, Voie du Roman Pays 34, B-1348, Louvain-la-Neuve,
Belgium. E-mail: \texttt{zhengyuan.gao@uclouvain.be} %
}\\
 \and Christian M. Hafner%
\thanks{Institut de Statistique, Biostatistique et Sciences Actuarielles (ISBA)
and Center for Operations Research and Econometrics (CORE), Universite
catholique de Louvain, Voie du Roman Pays 20, B-1348, Louvain-la-Neuve,
Belgium. Email: \texttt{christian.hafner@uclouvain.be}, corresponding author. %
}\\
 }
\maketitle
\begin{abstract}
Filtering has had a profound impact as a device of perceiving information and deriving agent expectations
in dynamic economic models.
For an abstract economic system, this paper shows that the foundation of applying the filtering method
corresponds to the existence of a conditional expectation as an equilibrium
process. Agent-based rational behavior of looking backward and looking
forward is generalized to a conditional expectation
process where the economic system is approximated by a
class of models, which can be represented and estimated
without information loss. The proposed framework elucidates
the range of applications of a general filtering device and is not limited
to a particular model class such as rational expectations.
\end{abstract}
\thispagestyle{empty}

Key Words: Perception, filter, rational expectations, estimation \\

JEL Classification: C01, C02, C50, C65

\newpage{}

\baselineskip19.15pt \pagenumbering{arabic}

\section{Perception as a Filter\label{sec:Perception}}

Human aspirations and desires imply forward-looking decisions.
Making a forward-looking decision requires the construction of an expectation based on the information that is backward induced. It can be characterized by a process of making conditional expectations.
Such a process allows us to construct our subjective beliefs rather than perceiving the world as mere presentation.
Instead, we perceive the world as an object of perception in which our own experience and knowledge are integrated.
We become unified with that perception. Thus formulating an expectation of an agent cannot separate the perceiver from the perception. From the market's perspective, an agent no longer views himself as an individual, but rather becomes a ``cognitive subject'' of a time-invariant perception  where the laws of the economy are revealed. The process of forming conditional expectations is the practical consequence of this identity
as it attempts to represent our immersion with the world, and these attempts constitute the essential laws of the world.
We refer to this process as a {\em filter}.

The equilibrium in rational expectation models is based on the assumption
that the agents in the model are confident with their perceptions.
As a consequence, the agents trust that the optimal actions following
their expectations will give them maximum utilities or profits.
Sequential decisions are made under expectations conditioning
on past information. Such decision processes induce a representation%
\footnote{The representation is defined in \citet{KoopmansRubinLeipnik1950}
as ``a way of writing the system''. In general, the representation
is a way of presenting the law of motion of this system. %
} of the conditional probability distribution of the economic dynamics:
the law of motion perceived by agents will sequentially influence
the law of motion that agents actually face. These perceptions can be viewed as filters as they are
often characterized by recursive projection schemes \citep{Simon1959}.
There is, however, a dichotomy in the understanding of filters in economic theory and econometrics.

In economic theory, the filtering method (perception) is an \emph{active}
process involving the agent's attention to a small part of the whole
dynamic system and excluding almost everything out of the scope of
their attention. In econometrics, the filtering method is treated
as a \emph{passive} process selecting some statistical relevant information
of a given dynamical model. Both aspects of the filtering method are
mainly treated by \citet{Sargent1987} and \citet{Hamilton1994}, respectively. Some other, similar types of
perceptions have been discussed in \citet{MarcetSargent1989}, \citet{MarcetSargent1989JET}, \citet{HansenSargent2007}, and
\citet{HansenPolsonSargent2010}. Rather than considering a specific
economic or econometric model, this paper characterizes general
perceptions that are concealed in abstract models where both the active
and passive arguments can be integrated.

\cite{Hansen2007} examines the inference of rational expectation models from two separate perspectives. The way he reduces the gap between these two perspectives is to enlarge the models used by the economic agents. We extend the scope to a more general
class of models for the economic agents under which the integration becomes more natural. The filtering method provides formal representation-estimation
processes for practical situations. The relevance of these processes in our setting is that they are not merely statistical
techniques, but actual dynamic mechanisms used in expectations and perceptions.
The equations and assumptions that appear in the estimation
procedure correspond to the perception of economic agents. This is in the spirit of
 \citet{Klein1950}: ``The purpose in building econometric
models is to describe the way in which the system actually operates.[...] The construction of such a system is a task in which economic
theory and statistical method combine.''

In our context, building econometric models is related to the proper
specification of filters. Considering filters as a perception
device in an abstract economy induces a large class of
econometric models. The remaining econometric task
is to reduce the abstract representation to
a feasible form for estimation. On the other hand, as the complexity
of the environment increases, agents learn more and more about the
mechanisms and processes that are used to relate themselves to that
environment and to achieve their goals.
The availability of general implementable techniques in econometrics
will elucidate inaccessible places for the abstract models.
As the comment in \citet{Simon1959}
on modeling human expectation says, ``it is one thing to have a set of
differential equations, and another thing to have their solutions.''
Economic theory predominates in the definition of the representation
describing a certain type of economic dynamics, while econometric methods
are associated with the determination of the agent's way of estimation.

The need for reconciling economic theory and econometrics may not be obvious in linear or linearizable
structural models when the laws of motion are
specified on either theoretical or empirical grounds, and hence either side of the coin will be
sufficient to justify the model. However, if we start with an abstract
economy, the limitation of modeling tools in this complex environment
will force us to integrate all available factors. The generalization
of the filtering mechanism will give
a fundamental interpretation of agent's perceptions in complex situations.
All subsequent statistical procedures, such
as estimation, inference and forecasting, will more or less depend
on the way this generalization has been formulated.
Our contribution will be to make this generalization available.

Early attempts in this direction were made in the framework of rational
expectation models. The expected utility or profit for each agent
depends on the assumption of the agent's perception mechanism.
\citet{EvansRamey1992} show how agents adjust
their long-term expectations under different perception rules or predictions.
The standard method assumes that the agent's prediction
uses a single, presumably correct law of motion.
But if perceptions of agents differ, then the corresponding predictions are incompatible with
each other. A sequence of works by Hansen and Sargent, covered by the
monograph \citet{HansenSargent2007}, introduces the concern of robustness
of agent's expectations. The main idea is that agent's decisions contain their prior
worries about a possible mis-specification of the model. These multiple
priors generalize the perception or the law of motion contained in the
agent's mind. The associated perception mechanism for the robust decision
agents is also a filter, called {\em robust filter}, and is a dual of the
linear-quadratic regulator problem of utilities. Given the robustness
concerns, the agent makes the expectation based on a \emph{class}
of models whose information is presumably not far from the underlying
model in a certain metric. Therefore, the robust filter
generalizes the mechanism used in the single law case.

Our motivation is related to the robust filters of \citet{HansenSargent2007} and \citet{Hansen2007},
but the relation is more one of spirit rather than of a precise form.
In terms of the robustness framework, our objective of generalizing filtering mechanisms attempts
to analyze how large the class of alternative
models could be while remaining consistent with some general filter. The class used in
\citet{HansenSargent2007} is restricted by a risk threshold
and is equivalent to a class of partially specified processes. If
we enlarge such a class to an abstract economic system, is the filtering type
mechanism still an optimal choice for agents in this system?
We ask this question because rational expectation models are merely an approximation of
the real world. Exploring the applicable range of a general filtering device will demonstrate
its usefulness essentially irrespective of the particular form of the economic model.

The paper aims at making the previous assertions rigorous. The mathematical
tools we use are borrowed from stochastic analysis and stochastic control.
For expositional purposes we delay the formal results of the paper to Section 4.
Section 2 introduces the economic model as a general probability space and anticipates the main result on the existence
of filters. In Section 3 we define three claims under which the economic system is supposed to operate and which will
allow us to obtain an explicit representation of the filter.
Section 5 elaborates on the claims in asset pricing, on a framework with stochastic volatility, and on the link of the
general filtering results with those in a linear framework. Section 6 summarizes the main findings. All proofs are delegated to an appendix.

\section{The Model\label{sec:The-Model}}

\subsection{An Abstract Economy\label{sub:An-Abstract-Economy}}

The economic system in this paper is driven by components
whose evolution is modeled via stochastic processes. We stack
these components into a \emph{state vector} and denote it
as $X_{t}=\{X_{i,t},t>0,i=1,\dots, I_{X}\}$. When we state specifically
$t\in\mathbb{Z}^{+}$, $X_{t}$ is a discrete time process, otherwise
$X_{t}$ is assumed to be a continuous time process. Throughout the paper, $x$
refers to either a deterministic variable or a realization of $X_{t}$.
The state vector $X_{t}$ may consist of unobservable features such as private information,
utilities or underlying prices.

The \emph{underlying abstract economic model} in our context is a
probability space $(\Omega,\mathcal{F},\mathbb{P})$,  where we define
$X$ together with a filtration $(\mathcal{F}_{t})_{t\geq0}$. The
filtration $\mathcal{F}_{t}$ is right continuous, $\mathcal{F}_{t}=\cap_{\epsilon>0}\mathcal{F}_{t+\epsilon}$,
and $\mathcal{F}=\lim_{t\rightarrow\infty}\mathcal{F}_{t}$.
In $\mathcal{F}$, there is a $\mathbb{P}$-null set%
\footnote{A set $A$ is called $\mathbb{P}$-null set if $A$ is measurable
on $(\Omega,\mathcal{F},\mathbb{P})$ and $\mathbb{P}(A)=0$. %
} contained in $\mathcal{F}_{0}$ and consequently in all $\mathcal{F}_{t}$.

The values of the states of $X_{t}$ form a measurable space $(\mathbb{S},\mathcal{S})$.
The state space $\mathbb{S}$ is a compact metric space and is associated
with a Borel $\sigma$-algebra $\mathcal{S}=\mathcal{B}(\mathbb{S})$.
We assume $X_{t}$ to be measurable and the measurable mapping is:
\[
X_{t}(\omega):\left([0,\infty)\times\Omega,\,\mathcal{B}([0,\infty))\otimes\mathcal{F}\right)\rightarrow(\mathbb{S},\mathcal{S}),
\]
 where $\otimes$ denotes the product operator for $\sigma$-fields.

While the essential features of economic dynamics are assumed to be
captured by the state variables $X_{t}$, the observable economic
variables and public information, in general, are not. Since the observable and
public information is the major resource for agents to make their
expectations about how the economic states $X_{t}$ change, we specify
it by another process $Y_{t}$. Let $Y_{t}$ include those observable
variables that are related with $X_{t}$, and assume that the dimension of $Y_t$ is not larger
than that of $X_t$. The \emph{observable information
set} is $\mathcal{Y}:=\vee_{t\in\mathbb{R}^{+}}\mathcal{Y}_{t}$, the
filtration generated by the observable process $Y_{t}$ such that
\[
\mathcal{Y}_{t}:=\sigma(Y_{s},s\in[0,t])\vee\mathcal{N}
\]
with $t\geq0$, where $\mathcal{N}$ is the collection of all $\mathbb{P}$-null
sets of our economic model $(\Omega,\mathcal{F},\mathbb{P})$.\footnote{The notation
$A\vee B$ means that the set is generated by $A$ and $B$.}
The available
information $\mathcal{Y}_{t}$ is induced by observations up to time
$t$ and thus it will be used for making inference about $X_t$. Since
$\mathcal{F}_{t}$ is right continuous, to make $Y_{t}$ compatible
with $X_{t}$, we assume that the filtration $\mathcal{Y}_{t}$ is
also \emph{right continuous}.

Agents will construct
their perceptions of states $X_{t}$ based on the information in $\mathcal{Y}_{t}$.
It means that agents are able to construct $\pi_{t}$, the \emph{conditional
distribution} of $X_{t}$, given $\mathcal{Y}_{t}$. The conditional
expectations characterize the perceptions or filters of $X_{t}$.
For any $t$, the conditional distribution is a stochastic process
$(\omega,t)\mapsto\pi_{t}(\omega)$ such that
\[
\pi_{t}\left(X_{t}(\omega)\in A\right)=\mathbb{P}\left[X_{t}\in A|\mathcal{Y}_{t}\right](\omega),\qquad A\subset\mathcal{S}.
\]
For simplicity, we will write $\pi_{t}(\omega)$ as $\pi_{t}$ in short.

Since the perceptions are processes, any valuation of $X_{t}$ will
involve an expectation w.r.t. the conditional distribution process.
The definition of conditional expectation of $X_{t}$ is restricted
to an equivalence class of $\mathcal{Y}_{t}$-measurable $X$ such
that:
\[
\mathbb{P}[X_{t}\cap B]=\mathbb{P}[X\cap B],\qquad X,B\subset\mathcal{Y}_{t}.
\]
Then the expectation of a function $\varphi(X_{t})$ can be expressed
as $\int\varphi(x)\pi_{t}(dx)$. The conditional expectation of $\varphi$ is ultimately what is desired from filtering, but the methods for obtaining the conditional distribution process are quite involved. If this integral is well-defined for a class of functions $\varphi$, then we call
them \emph{choice functions} $\varphi$.

\subsection{Existence of Filters}

In the abstract economy $(\Omega,\mathcal{F},\mathbb{P})$, we consider
perception equivalently as a filter. But perception as a common human
behavior should always exist on either individual or aggregrate levels.
Will filters always exist in this abstract economy? Due to the $\mathbb{P}$-null
set $\mathcal{N}\subset\mathcal{Y}_{t}$, $\pi_{t}$ may in fact not
be well defined for all $\omega\in\Omega$ but only for $\omega$
outside the $\mathbb{P}$-null set. Thus, the question of existence
of $\pi_t$ is equivalent to the question under which circumstances
one can gain sufficient control over all $\mathbb{P}$-null sets $\mathcal{N}$
such that the expectation $\int\varphi(x)\pi_{t}(dx)$ is well-defined
for choice functions $\varphi$. In other words, the filters exist
in the economy when perceptions induce well-defined expectations.

The theorem of the existence of filters in our abstract economy will
be given in Section \ref{sec:Main-Results}, Theorem \ref{thm:ExistenceConditionalProb}.
Here, we discuss the consequences of this theorem without presenting
too many technical details. Suppose a process $^{o}\varphi(\cdot)$ can be thought of as the $\mathcal{Y}_{t}$-measurable
representation of the choice function $\varphi(\cdot)$. The theorem states that given some regularity
conditions, for any choice function $\varphi$, the expectation of
$\varphi(X_{t})$ w.r.t. the filter $\pi_{t}$ exists and is equal to
the process $^{o}\varphi(X_{t})$ under the $\mathbb{P}$ measure.
In other words, agent's perceptions of $\varphi(\cdot)$ coincide
with the observable information. Therefore, the existence of $^{o}\varphi(\cdot)$
induces the existence of $\pi_{t}(\cdot)$ for the choice function
$\varphi(\cdot)$ and vice versa.

Note that although relations between $X_{t}$ and $Y_{t}$
exist, expectations
conditional on $\mathcal{Y}_{t}$ do not necessarily coincide
with those conditional on $\mathcal{F}_{t}$. In particular, the $\mathbb{P}$-null
information originates in $\mathcal{F}_{t}$, but in $\mathcal{Y}_{t}$ this information contains unpredictable events that may happen.
Once agents observe these unpredictable events, their perceptions will be influenced.
We will emphasize this point in the following subsection.

\subsection{The Importance of the $\mathbb{P}$-null Set }

Although it complicates the set-up, the $\mathbb{P}$-null set is a crucial
feature in $(\Omega,\mathcal{F},\mathbb{P})$. Apart from its mathematical
characteristics, it is meaningful in economic problems and
affects our way of evaluating a model using empirical
data.

The role of the $\mathbb{P}$-null set in defining a conditional probability
has first been illustrated by Kolmogorov in his famous \emph{Borel\textendash Kolmogorov
paradox}. The paradox shows that the conditional probability is not
uniquely defined with respect to a null set, see \citet[Chapter 5]{Kolmogorov1956} and \citet[Chapter 2]{BainCrisan2008}.
From an economic perspective, one can think of the $\mathbb{P}$-null
set on $\mathcal{F}$ and $\mathcal{Y}$ as those \emph{unexpected}
\emph{events} which have been included in the underlying economic
mechanism $\mathcal{F}$ and in the agent's observable information
set $\mathcal{Y}$.

Although most events in the $\mathbb{P}$-null set of $\mathcal{F}$
correspond to events in the $\mathbb{P}$-null set of $\mathcal{Y}$, the two null sets
are not equivalent. To see the subtle difference, let us assume that the $\mathbb{P}$-null
events in $\mathcal{Y}$ result from aggregating countable
$\mathbb{P}$-null events in $\mathcal{F}$. The aggregation leads to uncountable events which
are too ``complex'' to be embedded in the underlying model, the probability
space $(\Omega,\mathcal{F},\mathbb{P})$. The model $(\Omega,\mathcal{F},\mathbb{P})$
attributes zero-measures for any countable event sets that are beyond
its explanatory power, but for uncountable event sets the model cannot even
affirm their existence.

We illustrate the economic meaning of some $\mathbb{P}$-null sets
on $\mathcal{Y}$ by a concept which we call \emph{overflow}. The
effect of this overflow is related to the regularization of the $\mathbb{P}$-null
set on $\mathcal{F}$, which is a result of Theorem \ref{thm:ExistenceConditionalProb}\emph{.}

To give an example of overflow, consider {\em economic bubbles}.
There is a long debate whether or not economic
bubbles exist. Rather than joining the debate, our intention here
is to use bubbles as an example to illustrate overflow characteristics.
Suppose some individual gamblers have complex trading strategies, and their gains are publicly observable.
These speculative trades, therefore, are included in
the information set $\mathcal{Y}_{t}$ at the agents' disposal.
However, the strategies behind these trades may not be fathomable
by the public and are conducted in manyfold
ways, such as forbidden disclosures (private information), special
technical equipments (e.g. high-frequency trading), or even improper policies (lobbying).
Any economic model that wants to cover some or all of these specific
features will make its complexity explode.
This limitation is recognized
by the public, and hence it is reasonable for the public
to believe that the underlying economic model $(\Omega,\mathcal{F},\mathbb{P})$
will set zero measure on each of these strategies and the associated
actions because they are unexplained by the model. In other
words, each action of the trading strategy is in the $\mathbb{P}$-null
set on $(\Omega,\mathcal{F},\mathbb{P})$.

The economic bubble can
be considered as an aggregated effect of these trading strategies.
Since there are numerous speculations happening in every minute, it
is natural to think that their aggregation is uncountable.
Later, we will show that an uncountable collection of null sets is
not necessarily incorporated in the $\mathbb{P}$-null set of $\mathcal{F}$.
This means that the aggregated effect, the bubble, may have a positive
probability to occur, namely to appear in $\mathcal{Y}$.

To formalize the previous argument, let $A_{1}$, $A_{2}$,$\dots$
$\in\mathcal{S}$ be a sequence of pairwise disjoint sets. In order
to ensure that $\pi_{t}$ is a regular conditional distribution, the
$\sigma$-additivity condition needs to be satisfied:
\[
\pi_{t}\left(\cup^{\infty}_{i}A_{i}\right)=\sum_{i=1}^\infty \pi_{t}(A_{i})
\]
 for every $\omega\in\Omega\;\backslash\;\mathcal{N}(A_{i},\: i\geq1)$, where
$\mathcal{N}(A_{i},\: i\geq1)$ is the $\mathbb{P}$-null set
for the disjoint set $A_{i}$ for any $i\geq1$. Let the collection
of these null sets be $\mathcal{N}_{0}$. Note that the power set
of all null sets is $2^{\mathbb{N}}$ which is uncountable.
This means that $\mathcal{N}_{0}$ is uncountable. We know that $\pi_{t}$
satisfies the $\sigma$-additivity condition only if $\omega\in\mathcal{N}(A_{i})$
for any $i\geq1$ but not $\omega\in\mathcal{N}_{0}$. Therefore,
some event in $\mathcal{N}_0\;\backslash\;\cup_{i=1}^\infty\mathcal{N}(A_{i})$
is not in the null sets for $\pi_{t}$ and has positive probability to occur:
\[
\mathcal{Y}\cap\{\mathcal{N}_{0}\;\backslash\;\cup_{i=1}^\infty\mathcal{N}(A_{i})\}\neq\varnothing.
\]
In fact, the set $\mathcal{N}_{0}$ need not even be measurable because
it is defined in terms of an uncountable union.\footnote{This statement follows from the axiom of choice, which allows for the construction of non-measurable sets, i.e., collections of events that do not have a measure in the ordinary sense, and whose construction requires an uncountable number of events.}
Then $\pi_{t}$ cannot
be a probability measure. The purpose of Theorem \ref{thm:ExistenceConditionalProb}
is to regularize this problem so that the projected $\pi_{t}$ is
on a countable subspace. This regularization implicitly forces $\pi_{t}$
to ignore those collections of countable $\mathbb{P}$-null sets on $\mathcal{F}$.
As a consequence, the abstract economic model might not be a ``proper'' model for all events, but one that approximates
a complex reality.

\section{A Feasible Econom(etr)ic Model}\label{sec:Model}

As shown in the previous section, the $\mathbb{P}$-null set on $\mathcal{F}$
may induce the arbitrariness of $\pi_{t}(\omega)$ on $\mathcal{Y}$.
For the $\mathbb{P}$-null set on $\mathcal{Y}$, individuals may have
arbitrary beliefs about the event sets, because they cannot figure
out any ``law'' on the set. The arbitrariness allows us to modify
$\mathcal{Y}_{t}$-adapted processes by changing the values of these
processes on the $\mathbb{P}$-null set, which corresponds to a change of measure.
Then the new process should still be $\mathcal{Y}_{t}$-adapted. It
accommodates the complexity of the real world but it induces a
class of arbitrary filters $\pi_{t}$. Due to the arbitrariness, the conditional
distribution process $\pi_{t}\mbox{(\ensuremath{\omega})}$ exists
even though some observable event sets in the economy are not explained
by the underlying model. If the model needs
a regular solution, it should be disencumbered of these irregularities.
In this section, we look for a feasible model that will regularize
the expected process $^{o}\varphi(X)$ and exploit a specific representation
of it.

With three additional claims, one can obtain an explicit solution
rather than an abstract process of $^{o}\varphi(X)$. These claims
are the following: First, the martingale fairness claim regularizes a class of probabilities
that are not uniquely defined on the $\mathbb{P}$-null set on $\mathcal{F}$.
Second, the invariance fairness claim induces a specification of $X$ that is
embedded in the general model $(\Omega,\mathcal{F},\mathbb{P})$.
Finally, the independent complement claim specifies the motions of the observable
process $Y$. The first and second claims basically consider the same
issue of finding a feasible sub-class models of the underlying economy
$(\Omega,\mathcal{F},\mathbb{P})$, but the development of the invariance
fairness claim depends on the martingale fairness claim. With the
specification of the law of $X_{t}$, the last claim induces a feasible
representation of $^{o}\varphi(X_{t})$ based on the observable process
$Y$.

\subsection{Fairness Existence}\label{sec:MF}

The following claim introduces a ``stochastic constant'' upon which
we can build our model:
\begin{claim*}
[Martingale Fairness, MF%
\footnote{The problem could be extended to a semi-martingale problem by using
a No Free Lunch claim (the Kreps-Yan Theorem). But then $X$ in general
cannot provide any explicit solution for the conditional probability
$\pi_t$. %
}] A probability measure $\mathbb{Q}$ on $(\Omega,\mathcal{F})$
is absolutely continuous with respect to $\mathbb{P}$, such that
$\mathbb{Q}\sim\mathbb{P}$. The information of state $X_{t}$ at
any time $t$ is ``fair'' for all agents under $\mathbb{Q}$ and
the information is memoryless, i.e. the process $X_{t}$ is Markovian%
\footnote{Formally, $\mathbb{E}[f(X_{T})|\mathcal{F}_{t}]=\mathbb{E}[f(X_{T})|X_{t}]$
for any $f(\cdot)\in B(\mathbb{S})$.%
}.
\end{claim*}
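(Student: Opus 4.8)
The content of MF that calls for justification is the \emph{existence} of a measure $\mathbb{Q}$ enjoying the three stated properties (equivalence to $\mathbb{P}$, the fair-game property of $X$, and the Markov property), and the plan is to establish it constructively rather than by a bare separation argument: I would exhibit $\mathbb{Q}$ as an explicit Radon--Nikodym change of measure from $\mathbb{P}$ whose density process is a functional of the current state $X$, so that equivalence $\mathbb{Q}\sim\mathbb{P}$, the ``fairness'' of $X_t$, and its being ``memoryless'' are all delivered by a single object. Compactness of the state space $\mathbb{S}$ and boundedness of the choice functions will carry all the integrability that is required.

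First I would record that, since $X_t$ takes values in the compact metric space $\mathbb{S}$ and $\mathcal{F}_t$ is right-continuous and $\mathbb{P}$-complete, $X$ admits a right-continuous-with-left-limits modification and is a bounded special $\mathbb{P}$-semimartingale with canonical decomposition $X_t=X_0+M_t+A_t$, $M$ a local martingale and $A$ predictable of finite variation. Writing $dA_t=d\langle M\rangle_t\,\theta(X_{t-})$ for a state-dependent ``price of information'' field $\theta$ --- whose existence as a Borel field on $\mathbb{S}$ is exactly the regularity afforded by Theorem~\ref{thm:ExistenceConditionalProb}, which discards the pathological null sets and leaves $X$ with a Markovian semimartingale representation --- I would set $Z_t:=\mathcal{E}\!\left(-\int_0^{\cdot}\theta(X_{s-})^{\top}\,dM_s\right)_{t}$, check by a Novikov/boundedness argument that $Z$ is a strictly positive true $\mathbb{P}$-martingale with $\mathbb{E}_{\mathbb{P}}[Z_t]=1$, and define $d\mathbb{Q}/d\mathbb{P}\,\big|_{\mathcal{F}_t}=Z_t$. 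Strict positivity of $Z$ yields $\mathbb{Q}\sim\mathbb{P}$. By the Girsanov transformation the finite-variation part of $X$ is annihilated under $\mathbb{Q}$, so $X$ is a $\mathbb{Q}$-local martingale and, being bounded, a genuine $\mathbb{Q}$-martingale: this is the ``fairness'' of $X_t$. That it is fair ``for all agents,'' i.e.\ with respect to any sub-filtration $\mathcal{Y}_t\subset\mathcal{F}_t$ an agent might use, then follows by passing to the $\mathbb{Q}$-optional projection and applying the tower property, which is consistent with the notation ${}^{o}\varphi(\cdot)$ already introduced. Finally, because $\theta$ depends on $\omega$ only through $X_{t-}$, the density $Z$ is a multiplicative functional of $X$, so the Girsanov transform maps the well-posed $\mathbb{P}$-martingale problem for the generator $\mathcal{L}$ to the well-posed $\mathbb{Q}$-martingale problem for $\mathcal{L}^{\mathbb{Q}}=\mathcal{L}-\theta^{\top}\nabla$ on $C(\mathbb{S})$; uniqueness of that martingale problem is precisely the Markov property of $X$ under $\mathbb{Q}$, which is the ``memoryless'' assertion of the footnote.

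The step I expect to be the genuine obstacle is the simultaneous demand placed on $\theta$ in the construction of $Z$: the same field must both kill the $\mathbb{P}$-drift of $X$ and be a function of the current state alone. This is not automatic for an arbitrary abstract economy --- it is exactly where the regularization of Section~2 is indispensable, since without discarding the uncountable, possibly non-measurable collection $\mathcal{N}_0$ of null sets the semimartingale characteristics of $X$ need not be state-measurable and $\theta$ need not exist as a Borel field on $\mathbb{S}$. A secondary, more technical obstacle is promoting the $\mathbb{Q}$-local martingale to a true martingale and securing $\mathbb{E}_{\mathbb{P}}[Z_t]\equiv1$: compactness of $\mathbb{S}$ handles the former, but the latter needs bounded $\theta$ or a Novikov-type bound, which I would fold into the regularity hypotheses. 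In discrete time, $t\in\mathbb{Z}^{+}$, all of this collapses to the elementary observation that one may take an Esscher-type density $Z_n=\prod_{k\le n}z_k(X_{k-1},X_k)$ with $\mathbb{E}_{\mathbb{P}}[z_k\mid X_{k-1}]=1$ and $\mathbb{E}_{\mathbb{Q}}[X_k\mid\mathcal{F}_{k-1}]=X_{k-1}$, so the real content of MF is the continuous-time existence just sketched. As the footnote indicates, one could instead invoke a Kreps--Yan separation argument to obtain $\mathbb{Q}$ non-constructively from an absence-of-free-lunch postulate, but that route delivers only a semimartingale-type conclusion and does not by itself preserve the Markov structure, which is why the constructive exponential-density route is the one I would pursue.
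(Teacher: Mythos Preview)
The fundamental issue is a misreading of the role of MF in the paper. The ``Martingale Fairness'' claim is not a theorem the paper proves; it is a \emph{modeling postulate} that the paper imposes on the abstract economy in order to extract a tractable sub-class of models. This is explicit in Section~\ref{sec:Model}: the text introduces MF with ``The following claim introduces a `stochastic constant' upon which we can build our model,'' and every subsequent use is conditional (``If claim MF is true\ldots'' in the IF claim; ``If the MF and IF claims hold\ldots'' in Theorem~\ref{thm:riskfree}). The paper never attempts to derive the existence of $\mathbb{Q}$ from the primitives of Section~\ref{sec:The-Model}; it simply \emph{assumes} such a $\mathbb{Q}$ exists and then studies the consequences (Corollary~\ref{cor:master-eq}, Theorem~\ref{thm:MartinagleDiffusion}, etc.). So there is no ``paper's own proof'' to compare against, and your attempt to supply one is answering a question the paper does not ask.

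Even read as a free-standing existence argument, your construction has real gaps. You assume at the outset that $X$ is a special $\mathbb{P}$-semimartingale whose finite-variation part satisfies $dA_t=d\langle M\rangle_t\,\theta(X_{t-})$ for a Borel field $\theta$ on $\mathbb{S}$; nothing in the paper's abstract setup (a compact state space, a right-continuous filtration, and the existence of the optional projection in Theorem~\ref{thm:ExistenceConditionalProb}) delivers either the semimartingale property of $X$ or the state-dependence of its characteristics. Invoking Theorem~\ref{thm:ExistenceConditionalProb} for this purpose is a non sequitur: that theorem concerns the existence of the $\mathcal{Y}_t$-conditional distribution $\pi_t$, not the structure of $X$ under $\mathbb{P}$. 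Your appeal to ``uniqueness of the $\mathbb{Q}$-martingale problem'' to obtain the Markov property likewise presupposes a well-posed martingale problem for a generator $\mathcal{L}$ that has not been shown to exist at this level of generality---indeed, in the paper's logic it is the \emph{later} IF claim (and Theorem~\ref{thm:MartinagleDiffusion}) that pins $X$ down as a diffusion with a second-order generator, not MF. In short, your argument quietly imports the conclusions of IF in order to ``prove'' MF, which inverts the paper's order of assumptions.
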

Fairness means the martingale property of $X$:
\[
\mathbb{E}_{\mathbb{Q}}[X_{t}|\mathcal{F}_{s},s\leq t]=X_{s}\quad\mbox{and}\quad\mathbb{E}_{\mathbb{Q}}[X_{t}-X_{s}|\mathcal{F}_{s},s\leq t]=0.
\]
The martingale model $(\Omega,\mathcal{F},\mathbb{Q})$ is treated
as a \emph{ghost model} since fairness may never happen in reality. However, if one accepts the existence of this martingale
model, it will guide us to a feasible base-line model and help us
to solve the original problem. If there is a $\mathbb{P}$-martingale process
$Z$ on $(\Omega,\mathcal{F})$, then any $\mathbb{Q}$-martingale
process $X$ implies a $\mathbb{P}$-martingale process $ZX$, due to the absolute continuity of $\mathbb{Q}$
and $\mathbb{P}$. It is obvious that if a process can be regularized on either measure,
then it can also be regularized on the other one.

The Markovian structure of $X$ means that the filtration $\mathcal{F}_{s}$
is independent of the $\mathcal{F}$-adapted $X_{u}$ if $s<t<u$.
For arbitrary time $t<u$, the Markovian structure implies a transition
kernel $\mathbb{Q}_{u-t}(X_{u}|X_{t})$. The Chapman-Kolmogorov equation
of the transition kernel is also available such that
\[
\mathbb{Q}_{u-s}(X_{u}|X_{s})=\int\mathbb{Q}_{u-t}(X_{u}|X_{t})\mathbb{Q}_{t-s}(dX_{t}|X_{s})
\]
which can be simply stated as $\mathbb{Q}_{\tau+\tau'}(\cdot|\cdot)=\mathbb{Q}_{\tau'}\mathbb{Q}_{\tau}$
for $\tau=t-s$, $\tau'=u-t$. The existence of the kernel $\mathbb{Q}_{\tau'}(\cdot|\cdot)$
is a direct result of the Kolmogorov existence theorem \citep[Theorem 7.4]{Kallenberg2002}.
It is obvious that the transition kernel $\mathbb{Q}_{\tau'}(\cdot|\cdot)$
is a regular conditional probability.

With the MF claim, in Section \ref{sec:Main-Results} Corollary \ref{cor:master-eq},
we give a gain-loss (master) equation to describe the dynamics of
$X_{t}$:
\[
\frac{\partial}{\partial\tau}\mathbb{Q}_{\tau}(X_{u}|X_{s})=\int\left\{ \mathcal{W}(X_{u}|X_{t})\mathbb{Q}_{\tau}(dX_{t}|X_{s})-\mathcal{W}(dX_{t}|X_{u})\mathbb{Q}_{\tau}(X_{u}|X_{s})\right\}
\]
where the function $\mathcal{W}(X_{u}|X_{t})$ is the time derivative
of the transition probability at $\tau=0$, called \emph{transition
probability per unit time}. This equation describes the complete transition
pattern of $X$ by showing the variation of the corresponding transition
kernel. If $\partial_{\tau}\mathbb{Q}_{\tau}(X_{u}|X_{s})$ is set
to zero, the evolution of $X$ attains a balance. The equation
merely states the fact that the sum of all transitions per unit time
into any state $X_{t}$ must be balanced by the sum of all transitions
from $X_{t}$ into other states. Gain balances loss, in other
words, we have a steady state.%
\footnote{When the process is assumed to be homogeneous in time, the family
of $\mathbb{Q}(\cdot|\cdot)$ is a semigroup of transition kernels
and has been extensively studied in recent works of operator methods,
see e.g. \citet{HansenSahaliaScheinkman2009}.%
}

\subsection{Invariance Behaviors}\label{sec:IF}

With the martingale fairness claim, we have seen that the Markovian
model gives us an equation to measure the variation of state transitions
of the underlying economy. The equation is valid at any time-point
and in any state, but the equation provides no clue about $\mathcal{W}(\cdot|\cdot)$,
the \emph{transition probability per unit of time}. Now an idea is to
extract some information about the statistics of $\mathcal{W}(\cdot|\cdot)$,
in particular first and second moments. This type of information should be able
to generate a class of sub-models that mimic the behavior of the original model of $X$. We need to find out under
which conditions
the sub-model is equivalent to the original one, in which case no
loss of information occurs when representing $(\Omega,\mathcal{F},\mathbb{P})$
by the ghost model $(\Omega,\mathcal{F},\mathbb{Q})$.

Let $f(\cdot,\cdot)$ be a function satisfying the \emph{maximum principle
up to second order}, which means that for a compact subset of states
$B\in\mathbb{S}$, at time $t$, the maximum of $f(t,x)$ in $x\in B$
is found on the boundary of $B$, $\partial B$. The simplest example
of $f$ is a function in the linear functional class such that for
fixed $t$, $x<x'\in B$ implies $f(t,x)<f(t,x')$ (or $>$), $\nabla_{x}f(t,x')\geq0$
(or $\leq$) and $\triangle_{x}f(t,x')=0$ on $B\subset\mathbb{R}$.
The extremum of $f(t,\cdot)$ always exists on the boundary of the
domain. Here $\triangle_{x}$ and $\nabla_{x}$ denote the Laplace
and gradient operators on $x$, respectively.

Think of $f(\cdot,\cdot)$ as a time-dependent utility or
value function. The requirement of $f(\cdot,\cdot)$ being maximal
up to second order means that $\triangle_{x}f(t,x)$ is proportional
to $\frac{\partial f}{\partial t}(t,x)$ so that one can set up their
relation by some equation, for example
\[
\frac{\partial f}{\partial t}(t,x)=-\frac{1}{2}\triangle_{x}f(t,x)
\]
which would imply that $X_{t}$ follows a Wiener process. Thus,
the maximum principle pins down a specific evolution class for $X_{t}$.
We have the following claim to incorporate this idea.
\begin{claim*}[Invariance Fairness, IF] If claim MF is true, then for any $f(t,x)$
satisfying the maximum principle up to second order, there exists a martingale measure such that $f(t,x)$
will preserve the fairness on this measure. The law of $X_{t}$
will also satisfy the maximum principle.
\end{claim*}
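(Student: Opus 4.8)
The plan is to route the statement through the infinitesimal generator of the Markov martingale that MF supplies. By MF there is a measure $\mathbb{Q}\sim\mathbb{P}$ under which $X$ is Markovian and a $\mathbb{Q}$-martingale, with transition semigroup $\{\mathbb{Q}_{\tau}(\cdot\,|\,\cdot)\}$ (Chapman--Kolmogorov; existence from the Kolmogorov existence theorem, as already noted). Let $\mathcal{A}$ be its weak generator, described as in Corollary~\ref{cor:master-eq} through the transition probability per unit time $\mathcal{W}(\cdot\,|\,x)$. I would first pin down $\mathcal{A}$ on the functions at hand. By the Courr\`{e}ge-type characterisation, an operator compatible with the positive maximum principle has a L\'evy form $\tfrac12\operatorname{tr}(a\,\nabla_{x}^{2}\,\cdot)+b\cdot\nabla_{x}\,\cdot+c\,(\cdot)+(\text{non-local term})$ with $c\le0$. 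The martingale property of $X$ forces $b\equiv0$ (each coordinate $x\mapsto x_i$ is $\mathcal{A}$-harmonic) and $c\equiv0$ (constants are $\mathcal{A}$-harmonic), while the hypothesis that $f$ obeys the maximum principle \emph{up to second order} kills the non-local term on such $f$; hence $\mathcal{A}f(t,x)=\tfrac12\operatorname{tr}\!\big(a(t,x)\,\nabla_{x}^{2}f(t,x)\big)$ for a symmetric positive semidefinite $a$, and the master equation degenerates to a heat-type (Fokker--Planck) equation.

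Next I would produce the measure. The martingale-problem form of Dynkin's formula gives, under $\mathbb{Q}$,
\[
f(t,X_{t})=f(0,X_{0})+\int_{0}^{t}\Big(\partial_{s}f+\tfrac12\operatorname{tr}\big(a(s,X_{s})\,\nabla_{x}^{2}f\big)\Big)(s,X_{s})\,ds+N_{t},
\]
with $N$ a $\mathbb{Q}$-local martingale, so $f(\cdot,X_{\cdot})$ is ``fair'' precisely when the finite-variation term vanishes along the path, i.e. when $\partial_{s}f+\tfrac12\operatorname{tr}(a\,\nabla_{x}^{2}f)\equiv0$. The hypothesis $\triangle_{x}f\propto\partial_{t}f$ is exactly the statement that $\partial_{s}f+\tfrac12\operatorname{tr}(c\,I\,\nabla_{x}^{2}f)=0$ for some constant $c>0$ (the sign being forced by the requirement that $f(\cdot,X_{\cdot})$ be a martingale), so it suffices to exhibit a martingale model whose diffusion coefficient is $a\equiv c\,I$. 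Such a model sits inside the admissible class generated by MF: take the master equation with $\mathcal{W}(\cdot\,|\,x)$ having vanishing first moment and second moment $c\,I$; this is the constant-coefficient heat equation, its transition semigroup exists by the Kolmogorov existence theorem, the resulting $X$ is still a continuous martingale equivalent to $\mathbb{P}$ off the $\mathbb{P}$-null set, so it is a \emph{martingale measure} in the sense of MF, call it $\mathbb{Q}_{f}$. Under $\mathbb{Q}_{f}$ the finite-variation term above is identically zero; and since $\mathbb{S}$ is compact and $f(t,\cdot)\in C^{2}$ with bounded derivatives, $N$ is square-integrable on each $[0,T]$, hence a true $\mathbb{Q}_{f}$-martingale. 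Thus $f(\cdot,X_{\cdot})$ preserves fairness under $\mathbb{Q}_{f}$.

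For the law: under $\mathbb{Q}_{f}$ the transition kernel has a density $p_{\tau}(x,y)$ solving both the forward equation $\partial_{\tau}p\propto\triangle_{y}p$ and the backward equation $\partial_{\tau}p\propto\triangle_{x}p$ (the two agree in form because $\triangle$ is self-adjoint and there is no first-order term to break the symmetry). Hence the marginal law $\pi_{t}(\cdot)=\int\pi_{0}(dx)\,p_{t}(x,\cdot)$ again solves a heat-type equation, so by the parabolic maximum principle its spatial maximum over any compact $B\subset\mathbb{S}$ is attained on $\partial B$ and $\triangle_{x}\pi_{t}\propto\partial_{t}\pi_{t}$: the law of $X_{t}$ satisfies the maximum principle up to second order, as claimed. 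That $\pi_{t}$ is a genuine probability measure, so that ``density'' and ``law'' are meaningful here, is the content of Theorem~\ref{thm:ExistenceConditionalProb}.

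\textbf{Main obstacle.} The delicate step is the first one: showing that a Markov martingale on an \emph{abstract} compact metric state space has a purely second-order, drift-free local generator when tested against functions obeying the maximum principle, rather than concealing a jump part inside $\mathcal{W}(\cdot\,|\,x)$. This needs the positive-maximum-principle (Courr\`{e}ge) description of generators to remove the non-local term, the martingale property to remove the zeroth- and first-order coefficients, and compactness of $\mathbb{S}$ to keep the process conservative and $\mathcal{A}f$ bounded for the $f$ in question --- which is what makes Dynkin's formula and the square-integrability argument in step two legitimate. A secondary point, tied to the earlier discussion of $\mathbb{P}$-null sets, is that $\mathbb{Q}_{f}$ and the martingale $f(\cdot,X_{\cdot})$ are determined only up to modification on a $\mathbb{P}$-null set, so ``there exists a martingale measure'' is to be understood as existence within that equivalence class.
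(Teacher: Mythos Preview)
The statement labeled IF is not a theorem in the paper; it is one of three modeling \emph{claims} (MF, IF, IC) that the authors postulate in order to obtain an explicit filter. There is no proof of IF in the paper, and none is intended: IF is an \emph{assumption} restricting attention to transitions that preserve the maximum principle up to second order. What the paper does prove is Theorem~\ref{thm:MartinagleDiffusion}, which takes IF as input and shows it is equivalent to an It\^o diffusion specification for $X$. The authors are explicit that this is a restriction rather than a derivation: ``It does not mean that the unqualified transitions do not exist. \ldots\ those transition features are too complex to be embedded in a diffusion model.''

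Your proposal therefore attempts something the paper does not do and that cannot be done: deriving IF from MF alone. The gap is exactly the step you flag as the ``main obstacle.'' A Markov martingale under some $\mathbb{Q}\sim\mathbb{P}$ need not have a local second-order generator; a compensated Poisson process, for instance, is a Markov martingale whose generator is purely non-local. The Courr\`ege theorem characterises which operators satisfy the positive maximum principle---it does not say that testing an arbitrary Feller generator against functions $f$ obeying a ``maximum principle up to second order'' annihilates its L\'evy kernel. So the sentence ``the hypothesis that $f$ obeys the maximum principle up to second order kills the non-local term on such $f$'' has no justification, and without it the construction of $\mathbb{Q}_f$ with $a\equiv cI$ and the remainder of the argument collapse. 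In short, IF is layered on top of MF precisely to \emph{exclude} jump and higher-order dynamics; it is not implied by MF, and the paper never claims it is.
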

Theorem \ref{thm:MartinagleDiffusion} in Section 4 will show that the IF claim is another
way of specifying It\^o's diffusion problem%
\footnote{Mathematically, this claim intends to squeeze a stochastic problem
to a partial differential equation (PDE) problem so that it is possible
for economists to construct and solve a specific analytic problem. %
}. To the best of our knowledge, this is the first time that the problem is motivated
on the basis of the maximum principle.
Understanding the connection between this economic claim and econometric
models will help us to assess the potentials of modeling. That is, before doing
estimation, testing, or prediction, it is essential to realize
how far the model can reach in principle.

The diffusion structure induces a Wiener process specification for
$\mathcal{W}(\cdot|\cdot)$. The first and second moments of the
process are given by
\[
\int_{-\infty}^{\infty}x\mathcal{W}(X_{t}|dx),\quad\int_{-\infty}^{\infty}x^{2}\mathcal{W}(X_{t}|dx),
\]
 where $\mathcal{W}(\cdot)$ is the transition probability per unit
time under the Wiener law. This is a diffusion martingale type model.
Given the whole transition contents of $X$, our attention is only
restricted to those transitions that will maintain the maximum principle
up to second order. The reason is that only the transitions satisfying
\emph{invariance fairness} can be revealed and identified in standard
econom(etr)ic models. It does not mean that the unqualified transitions
do not exist. Conversely, many transitions in the system have high
order features such as complex trading strategies in pricing, multiple
correlated options, etc. What we can state however is that those transition
features are too complex to be embedded in a diffusion model%
\footnote{One can define a more complicated model to incorporate these effects,
but the cost is to use higher order stochastic calculus. In fact, later
we will see that the diffusion problem already induces an almost infeasible
representation for the conditional density. At this stage, the complexity
level of the problems that depart from the diffusion ones still
needs to be elaborated.%
}. Therefore, those higher order transition laws of $X$ will be
assigned to the $\mathbb{P}$-null set in $\mathcal{F}$.

\subsection{Indifferent Projection}\label{sec:IA}

The last component we have not yet exploited is the observable process $Y$.
In the economy, the process $Y$ reflects the law of $X$, so the
topological structure of $Y$ should contain as much information as
$X$. Since the IF claim is nothing but pinning the space of $X$ onto
the Wiener space $L^{2}(\mathcal{W})$, an $L^{2}$ space
with Wiener measure, it is natural to assume that $Y$ can be represented
in a similar space.

Given any map $h$ in $L^{2}$  and $Y=h(X_{t})$, if $Y$
can maintain all the information of the martingale diffusion process
of $X$, then we say that $Y$ shares an isometry property with $X$. Except
for the information maintained under the isometry property, note that
some information in $\mathcal{Y}$, such as the collection of $\mathbb{P}$-null sets in $\mathcal{F}$, is
not contained in $X$ but affects the outcome of $Y$. We use
measurement errors to represent this information. The following claim
is to specify the law of $Y$.

\begin{claim*}[Independent Complement, IC] Let $h(\cdot)$ be a map in $L^{2}$
which satisfies the maximum principle up to second order as in the IF claim. Suppose the observable process
$Y$ is contaminated by an additive generalized Wiener noise $W$, where
the noise process $W_{t}$ is generated by the information set $\mathcal{F}_{t}$
but is independent of $h(X_{t})$.
\end{claim*}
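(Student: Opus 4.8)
Since IC is posited as a structural claim rather than derived from MF and IF, a proof here should establish that the posited decomposition is \emph{realizable inside} the economy $(\Omega,\mathcal{F},\mathbb{P})$ already constrained by those two claims: one must exhibit an additive generalized Wiener noise $W$ that is generated by $\mathcal{F}_t$ yet independent of $h(X_t)$, with the resulting $Y$ lying in the Wiener-type $L^{2}$ space and retaining the isometry with $X$. The plan is to build $W$ by orthogonalization inside $L^{2}(\mathcal{F}_t)$ and to read off the stated properties from L\'evy's characterization and the It\^o isometry.

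First I would fix the stage left by IF. By Theorem \ref{thm:MartinagleDiffusion}, under MF the process $X$ is a diffusion martingale living in $L^{2}(\mathcal{W})$; since $h$ satisfies the same maximum principle up to second order, $h(t,X_t)$ is again space-time harmonic, so by It\^o's formula it is an $L^{2}$-martingale and in particular $h(X_t)\in L^{2}(\Omega,\mathcal{F}_t,\mathbb{P})$ for every $t$. This fixes the signal part of $Y$ and guarantees it is square-integrable, which is all that the hypothesis ``$h\in L^{2}$'' needs to deliver at this point.

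The heart of the argument is the construction of $W$ \emph{inside} the given filtration rather than on an enlargement: a naive product-space adjunction would produce an independent Brownian motion but destroy $\mathcal{F}_t$-adaptedness. Instead I would use the two degrees of freedom already present in the set-up. First, by MF the martingale structure is pinned only up to an equivalent change of measure $\mathbb{Q}\sim\mathbb{P}$, so one may work under the measure for which $X$ is the Wiener martingale. Second, the analysis of the $\mathbb{P}$-null set shows that $\mathcal{Y}$, hence $\mathcal{F}_t$ after the regularization of Theorem \ref{thm:ExistenceConditionalProb}, carries residual ``overflow'' information not reducible to $X$ --- precisely the measurement error the claim wants to model. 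Using the dimension compatibility $\dim Y\le\dim X$ together with the fact that $h$ is a genuine, non-invertible map, the closed subspace of $L^{2}(\mathcal{F}_t)$ generated by $\{h(X_s):s\le t\}$ has non-trivial orthogonal complement; a Kunita--Watanabe orthogonalization on that complement yields a continuous $\mathcal{F}_t$-martingale $W$ with $\langle W\rangle_t=t$, hence a Brownian motion by L\'evy's theorem, orthogonal to --- and, being jointly Gaussian, independent of --- $h(X_t)$. The Markov property of $X$ under MF then upgrades this to independence from the whole signal path, as the claim requires.

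Finally I would assemble $Y_t=\int_{0}^{t}h(X_s)\,ds+W_t$, the integrated form of the additive-white-noise statement, and check the two remaining assertions. The process $Y$ is an $\mathcal{F}_t$-semimartingale whose continuous martingale part has quadratic variation $t$, so $Y$ carries the same Wiener scaling as $X$ and sits in the required $L^{2}$ space; and the It\^o isometry, applied separately to the two orthogonal summands, gives $\mathbb{E}[Y_t^{2}]=\int_{0}^{t}\mathbb{E}[h(X_s)^{2}]\,ds+t$ with no cross term, so the map $X\mapsto Y$ preserves the $L^{2}$ inner product on the signal subspace --- the claimed isometry. I expect the middle step to be the main obstacle: one must show that the residual information in $\mathcal{F}_t$ is \emph{rich enough} to host a full-dimensional independent Brownian motion rather than a degenerate noise, and this will likely need either a predictable-representation hypothesis on $(\Omega,\mathcal{F},\mathbb{P})$ or an explicit enlargement-of-filtration argument followed by a change of measure back to $\mathbb{P}$ licensed by MF; the remainder is routine bookkeeping with It\^o's isometry and L\'evy's characterization.
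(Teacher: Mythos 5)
There is a genuine mismatch here, both with the paper and internally. The paper never proves the IC statement: like MF and IF it is a primitive modeling \emph{claim}, and the needed structure is simply postulated --- the text in Section \ref{sec:IA} declares that the information set of $W$ is generated by $\mathcal{F}\;\backslash\;\sigma(X)$, i.e.\ that $\mathcal{F}$ is assumed rich enough to carry noise beyond the signal, and the mathematics only begins afterwards in Theorem \ref{thm:riskfree}, which takes IC as a hypothesis and derives the change of measure and conditional independence. So your reformulation of the task as a realizability check is reasonable in spirit, but it is not what the paper does, and your realizability argument does not actually close. The central step --- producing a Brownian motion $W$ adapted to $\mathcal{F}_t$ yet independent of the signal --- is impossible precisely in the case MF and IF leave open, namely $\mathcal{F}_t=\sigma(X_s,\,s\le t)\vee\mathcal{N}$: a process measurable with respect to $\sigma(X)$ and independent of $X$ is a.s.\ deterministic, so no orthogonalization inside $L^{2}(\mathcal{F}_t)$ can manufacture it. You flag this as the ``main obstacle'' but do not resolve it, and the patches offered do not work: Kunita--Watanabe orthogonality gives $\langle W,M\rangle\equiv 0$, which is strictly weaker than independence, and the ``jointly Gaussian'' upgrade is unjustified because $h(X_t)$ is a nonlinear functional of a general diffusion and need not be Gaussian; non-invertibility of $h$ and $\dim Y\le\dim X$ likewise do not guarantee a nondegenerate complement. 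The only honest way out is to \emph{assume} the residual information is rich enough --- which is exactly the paper's postulate, so the ``proof'' collapses back into the claim itself.

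Two further slips are worth fixing. First, your isometry computation misapplies the It\^o isometry: the signal part of \eqref{eq:ObservableProcess} is a Lebesgue integral, so $\mathbb{E}\bigl[(\int_0^t h(X_s)\,ds)^2\bigr]$ is not $\int_0^t\mathbb{E}[h(X_s)^2]\,ds$ in general (Cauchy--Schwarz gives only an upper bound with a factor $t$); the paper uses the condition $\mathbb{E}[\int h(X_s)^2ds]<\infty$ merely to place $Y$ in $L^{2}(\mathcal{Y}_t)$, not to claim an inner-product-preserving map $X\mapsto Y$. Second, invoking the ``overflow'' null-set discussion and the freedom $\mathbb{Q}\sim\mathbb{P}$ as sources of independent randomness conflates measure-zero modifications and equivalent changes of measure with genuine enlargement of information: neither operation can create a nondegenerate process independent of $X$ where none exists. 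If you want a derivation-style statement, the defensible version is the conditional one actually proved in the paper: \emph{given} IC, Novikov's condition and Girsanov's theorem yield the measure $\tilde{\mathbb{P}}$ under which $Y$ is a Wiener process independent of $X$ (Theorem \ref{thm:riskfree}); the existence of $W$ itself must remain an assumption on $(\Omega,\mathcal{F},\mathbb{P})$.
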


The information set of $W$ is generated by $\mathcal{F} \;\backslash\; \sigma(X)$ where
$\sigma(X)$ is the $\sigma$-algebra generated by those $X$ satisfying MF and IF claims.
In practice, the Wiener process is also modeled independently\footnote{The dependence between $W$ and $X$ is difficult to eliminate in
economics and may cause an endogeneity problem. But, technically
speaking, this issue often arises by using a too simple function
$h(\cdot)$. Since $h(\cdot)$ can be highly non-linear, i.e.
containing all endogenous effects, it is reasonable to ignore this
issue here. %
} of $X_{t}$. Thus $\mathcal{Y}_{t}$ is a larger filtration than
$\mathcal{F}_{t}$, i.e.
\[
\mathcal{Y}_{t}=\sigma(X_{s},W_{s},\: s\in[0,t])\vee\mathcal{N},
\]
 since it allows for the measurability of the noise process. The process
$Y$ satisfying the IC claim is given as follows:
\begin{equation}
Y_{t}=\int_{0}^{t}h(X_{s})ds+W_{t},\qquad t\geq0.\label{eq:ObservableProcess}
\end{equation}
 Note that this specification is to restrict the process $Y$ in $L^{2}(\mathcal{Y}_{t})$
because
\[
\mathbb{E}\left[\int h(X_{s})^{2}ds\right]<\infty,\quad\mbox{ and}\quad W_{t}\in L^{2}(\mathcal{Y}_{t}).
\]

Theorem \ref{thm:riskfree} in Section \ref{sec:Main-Results} implies
that for a class of these models, there will be a concrete way of
specifying the conditional expectation process of this class. This
theorem is an important step to derive a specific form for the filter.
The representation
of $X_{t}$, as a result of IF claim, induces a feasible conditional
expectation for $\varphi(X)$, while IC claim allows us to attain
the expectation of $\varphi(X)$ conditioning on the information generated
by $Y_{t}$.

IC claim has a similar role as MF claim. MF claim is to ensure
the existence of a martingale problem for the state process. IC claim
does the same but for the observable process. The aim is to make
the state process $X_{t}$, the diffusion generator in IF and the
observable process $Y_{t}$ comparable.

\subsection{An Explicit Representation}

The previous claims are to obtain a representation of the conditional
distribution $\pi_{t}$ for
our class of models. Once a representation of $\pi_{t}$ is available,
each model in this class will correspond to a specification of this
representation. The data contained in the information $\mathcal{Y}$
will be useful for estimating the parameters of this specification.
A specified representation plays a role as a predictor for the corresponding
model and observable information.

The filtering problem is, essentially, to determine the conditional
distribution $\pi_{t}$ of $X(\omega)$ at time $t$ given the
information accumulated from observing $Y$ in the interval $[0,t]$.
Given all three necessary claims, we show in Theorem \ref{thm:KSP}
that for any bounded continuous choice function $\varphi\in C_{b}(\mathbb{S})$,
we can compute the conditional expectation of $\varphi$
\begin{equation}
\pi_{t}(\varphi):=\mathbb{E}[\varphi(X_{t})|\mathcal{Y}_{t}],\label{eq:conditionalexp}
\end{equation}
via an equation called Kushner-Stratonovich-Pardoux equation.

 Many dynamical estimates consist of computing the conditional
distribution of a target process given a partially observed history.  As the explicit solution of
$\pi_{t}(\varphi)$ gives the end time marginals of the preceding conditional distributions defined
for any bounded $\varphi$, this explicit representation of $\pi_{t}(\varphi)$ provides a concrete basis of nonlinear estimation problems.
This point of view is also at heart of the Bayesian methodology, where
the conditional distribution is the posterior and the path distribution of the states is
the prior.

\section{Main Results\label{sec:Main-Results}}

The state $X_{t}$
is $\mathcal{F}_{t}$-adapted, while the constructed conditional expectation is evaluated by $\pi_{t}\left(X_{t}(\omega)\in A\right)=\mathbb{P}\left[X_{t}\in A|\mathcal{Y}_{t}\right](\omega)$, an $\mathcal{Y}_{t}$-adapted process. Thus $\mathbb{E}[\varphi(X_{t})|\mathcal{Y}_{t}]=\int\varphi(x)\pi_{t}(x)$
may not be well-defined. Let $^{o}\varphi(X_{t})$ be a counterpart of $\varphi(X_t)$
that is projected on the smallest $\sigma$-algebra on $([0,\infty)\times\Omega,\mathcal{B}([0,\infty))\otimes\mathcal{F})$
such that $^{o}\varphi(X_{t})$ is $\mathcal{Y}_{t}$-adapted and measurable. Our first result is to show that for any choice function $\varphi \in B(\mathbb{S})$, the conditional expectation of $\varphi(X_t)$ is equivalent to $^{o}\varphi(X_{t})$ in probability. This result implies the existence of filters in our abstract economy.

With an enlarged $\sigma$-algebra, a representative process will
be defined for $\varphi(X_{t})$ even if $\varphi(X_{t})$ is not $\mathcal{Y}_{t}$-adapted
\citep[Theorem 7.1]{RogersWilliams2000}. This theorem is called \emph{projection
theorem} and will be used in the proof of Theorem \ref{thm:ExistenceConditionalProb}.
The projection theorem says that if a process $X$ is measurable and
bounded, then for every stopping time $T$, there is a representation
$^{o}X$ (optional process) such that
\begin{equation}
^{o}X_{T}\mathbb{I}_{\{T<\infty\}}=\mathbb{E}[X_{T}\mathbb{I}_{\{T<\infty\}}|\mathcal{Y}_{T}],\label{eq:Optional}
\end{equation}
as a projection of $X$ onto $\mathcal{Y}$ where $\mathbb{I}_{\{A\}}$ is an indicator function for a set $A$.
Here no restriction is imposed on the stopping time $T$. The idea of projecting an $\mathcal{F}$-measurable element onto $\mathcal{Y}$ is similar to formulating a filter of $X$ given the observable information in $\mathcal{Y}$. We apply this result to show the existence of a filter in the abstract economy.

\begin{thm}
\label{thm:ExistenceConditionalProb}Let $\mathcal{P}(\mathbb{S})$ denote the space of all probability
measures on $\mathbb{S}$. For a compact set $\mathbb{S}$
and its Borel $\sigma$-algebra $\mathcal{\mathcal{S}}$, there is
a $\mathcal{P}(\mathbb{S})$-valued conditional distribution process
$\pi_{t}$ such that for any bounded $\mathcal{S}$-measurable function
$\varphi\in B(\mathbb{S})$,
\[
\mathbb{P}\left[\int_{\mathbb{S}}\varphi(x)\pi_{t}(dx)={}^{o}\varphi(X_{t})\quad\forall t\right]=1.
\]
This distribution process is an equilibrium process for the economy whose underlying states are in a
probability space $(\Omega,\mathcal{F},\mathbb{P})$ and whose observable information is contained in $\mathcal{Y}$.
\end{thm}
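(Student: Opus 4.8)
The plan is to realize $\pi_t$ as a regular conditional distribution of $X_t$ given $\mathcal{Y}_t$: first apply the projection theorem to a countable, sup-norm dense family of continuous functions, then use the Riesz representation theorem together with the compactness of $\mathbb{S}$ to assemble a genuine probability measure for each time, and finally pass to all bounded $\mathcal{S}$-measurable $\varphi$ by a functional monotone class argument. The identification with ${}^{o}\varphi(X_t)$ and the equilibrium interpretation then fall out of the construction.

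First I would fix a countable $\mathbb{Q}$-vector space $\mathcal{D}\subset C(\mathbb{S})$ that contains the constants and is dense in the sup norm; such a $\mathcal{D}$ exists because $\mathbb{S}$ compact metric forces $C(\mathbb{S})$ to be separable. For each $\varphi\in\mathcal{D}$ the process $t\mapsto\varphi(X_t)$ is bounded and measurable, so \citet[Theorem 7.1]{RogersWilliams2000} provides an optional process ${}^{o}\varphi(X)$ satisfying \eqref{eq:Optional} for every stopping time, and in particular ${}^{o}\varphi(X_t)=\mathbb{E}[\varphi(X_t)\mid\mathcal{Y}_t]$ $\mathbb{P}$-a.s.\ for each deterministic $t$. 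For all $\varphi,\psi\in\mathcal{D}$, rationals $a,b$, and rational times $q$, linearity, positivity and normalization of conditional expectation yield ${}^{o}(a\varphi+b\psi)(X_q)=a\,{}^{o}\varphi(X_q)+b\,{}^{o}\psi(X_q)$, ${}^{o}\varphi(X_q)\ge 0$ whenever $\varphi\ge 0$, and ${}^{o}\varphi(X_q)=1$ when $\varphi\equiv 1$, each off a $\mathbb{P}$-null set; since $\mathcal{D}$ and $\mathbb{Q}$ are countable, the union $\mathcal{N}^{*}$ of all these sets is still $\mathbb{P}$-null. This countable reduction, available precisely because $\mathbb{S}$ is compact, is the regularization anticipated in Section~\ref{sec:Model}. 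Off $\mathcal{N}^{*}$, for every rational $q$ the map $\varphi\mapsto{}^{o}\varphi(X_q)$ is a positive unital $\mathbb{Q}$-linear functional on $\mathcal{D}$, hence extends by density to a positive unital linear functional of norm one on $C(\mathbb{S})$, and the Riesz representation theorem produces a Borel probability measure $\pi_q(\omega)$ with $\int_{\mathbb{S}}\varphi\,d\pi_q(\omega)={}^{o}\varphi(X_q)(\omega)$ for all $\varphi\in C(\mathbb{S})$.

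To reach every $t$ I would use right-continuity of $\mathcal{Y}_t$ and a right-continuous modification of each ${}^{o}\varphi(X)$ (again only countably many null sets are spent) and define $\pi_t$ as the weak limit of $\pi_q$ as $q\downarrow t$ along the rationals; weak compactness of $\mathcal{P}(\mathbb{S})$, which holds because $\mathbb{S}$ is compact, makes the limit a probability measure and gives $\int_{\mathbb{S}}\varphi\,d\pi_t={}^{o}\varphi(X_t)$ for all $\varphi\in C(\mathbb{S})$ and all $t$, off $\mathcal{N}^{*}$; on $\mathcal{N}^{*}$ I would set $\pi_t$ equal to a fixed measure, which alters no $\mathbb{P}$-a.s.\ statement. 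Finally, for fixed $t$ the class of bounded $\mathcal{S}$-measurable $\varphi$ with $\int_{\mathbb{S}}\varphi\,d\pi_t={}^{o}\varphi(X_t)$ $\mathbb{P}$-a.s.\ contains $C_b(\mathbb{S})$ and is stable under linear combinations and bounded monotone limits (monotone/dominated convergence for the integral, monotone stability of conditional expectation for the projection), so the functional monotone class theorem extends the identity to all $\varphi\in B(\mathbb{S})$. Taking $\varphi=\mathbb{I}_{A}$ shows $\pi_t(X_t\in A)=\mathbb{P}[X_t\in A\mid\mathcal{Y}_t]$, so $\pi_t$ is the conditional distribution of Section~\ref{sub:An-Abstract-Economy}; the equilibrium claim is the resulting self-consistency $\pi_t(\varphi)=\mathbb{E}[\varphi(X_t)\mid\mathcal{Y}_t]$, i.e.\ the law perceived through $\mathcal{Y}$ equals the projection of the law actually driving $X$ --- the looking-backward/looking-forward fixed point of the introduction.

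The step I expect to be the main obstacle is exactly the null-set bookkeeping stressed in Section~\ref{sec:Model}: each projection identity holds only off a $\varphi$- and $t$-dependent null set, and a naive treatment would require uncountably many, leaving $\pi_t$ ill-defined (the overflow phenomenon). The argument survives only because two structural inputs collapse everything to a countable operation --- separability of $C(\mathbb{S})$, from compactness of $\mathbb{S}$, controlling the $\varphi$ direction, and right-continuity of $\mathcal{Y}_t$ together with a right-continuous modification of the optional projection, controlling the $t$ direction. Confirming that such a modification is genuinely available here, and that the weak limits along rationals deliver a $\mathcal{P}(\mathbb{S})$-valued \emph{process} (jointly measurable in $(\omega,t)$, not merely a family of measures indexed by $t$), is where I expect the real work to lie.
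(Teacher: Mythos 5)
Your proposal follows essentially the same route as the paper's proof: a countable dense $\mathbb{Q}$-vector space of continuous test functions containing the constants, the optional projection theorem of \citet[Theorem 7.1]{RogersWilliams2000} to define the candidate functionals, aggregation of the countably many positivity/linearity null sets, the Riesz representation theorem on the compact space $\mathbb{S}$ to produce $\pi_t$, and a monotone class argument to pass from $C_b(\mathbb{S})$ to $B(\mathbb{S})$. The only divergence is your handling of the time direction (rational times plus right-continuous modification and weak limits), which is a minor refinement of the paper's reliance on the optional projection being defined simultaneously for all $t$, not a different argument.
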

Theorem \ref{thm:ExistenceConditionalProb} implies the existence of $\pi_{t}$ for the abstract model given Section \ref{sec:The-Model}. The importance is that a perception of any bounded choice function always exists in this abstract economy although there could be multiple ways of forming the perception due to the incompatibility between underlying and observable layers of the economy.

Following the model in Section \ref{sec:The-Model}, we will specify a uniquely representable law of perception by using the claims in Section \ref{sec:Model}. First, the martingale fairness (MF) claim in Section \ref{sec:MF} regularizes a class of probabilities of these processes
that are not uniquely defined on the $\mathbb{P}$-null set on $\mathcal{F}$
so that the analysis of the model can rely on \emph{one} ghost model
$(\Omega,\mathcal{F},\mathbb{Q})$. The claim also discloses that
the evolution of the state $X$ is completely captured by the transition kernel $\mathbb{Q}_{\tau'}(\cdot|\cdot)$,
whose variation describes the variation of
the evolution pattern of $X$. Thus, the MF claim extracts important characteristics
of the underlying dynamics.
\begin{cor}
\label{cor:master-eq}The martingale model $(\Omega,\mathcal{F},\mathbb{Q})$
implies a gain-loss equation for the system such that:
\[
\frac{\partial}{\partial\tau}\mathbb{Q}_{\tau}(X_{u}|X_{s})=\int\left\{ \mathcal{W}(X_{u}|X_{t})\mathbb{Q}_{\tau}(dX_{t}|X_{s})-\mathcal{W}(dX_{t}|X_{u})\mathbb{Q}_{\tau}(X_{u}|X_{s})\right\} .
\]
The first term is the gain of state $X_{u}$ due to transitions from
other states $X_{t}$ and the second term is the loss due to transitions
from $X_{u}$ into other states.
\end{cor}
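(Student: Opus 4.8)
The plan is to obtain the stated equation as the Kolmogorov forward (master) equation of the Markov transition semigroup $\{\mathbb{Q}_\tau\}_{\tau\geq 0}$ whose existence and Chapman--Kolmogorov property have already been recorded under claim MF. The starting point is the Chapman--Kolmogorov identity $\mathbb{Q}_{\tau+\tau'}(\cdot|\cdot)=\mathbb{Q}_{\tau'}\mathbb{Q}_{\tau}$, specialized to an infinitesimal outer step $\tau'=\Delta\tau$,
\[
\mathbb{Q}_{\tau+\Delta\tau}(X_{u}|X_{s})=\int\mathbb{Q}_{\Delta\tau}(X_{u}|X_{t})\,\mathbb{Q}_{\tau}(dX_{t}|X_{s}).
\]
From here the derivation is to expand $\mathbb{Q}_{\Delta\tau}$ for small $\Delta\tau$, insert the expansion, and pass to the limit.

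First I would record the short-time expansion of the kernel. Define $\mathcal{W}(X_{u}|X_{t})$, the transition probability per unit time, as the right derivative of $\tau\mapsto\mathbb{Q}_{\tau}(X_{u}|X_{t})$ at $\tau=0$ off the diagonal, and set $a(X):=\int\mathcal{W}(dX'|X)$ for the total rate of leaving state $X$. Then
\[
\mathbb{Q}_{\Delta\tau}(X_{u}|X_{t})=\bigl(1-a(X_{t})\,\Delta\tau\bigr)\,\delta_{X_{t}}(X_{u})+\Delta\tau\,\mathcal{W}(X_{u}|X_{t})+o(\Delta\tau),
\]
where the coefficient of the Dirac term is pinned down by the normalization $\int\mathbb{Q}_{\Delta\tau}(dX_{u}|X_{t})=1$. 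Validity of this expansion is precisely the statement that $\{\mathbb{Q}_{\tau}\}$ is (at least) a Feller semigroup with a densely defined infinitesimal generator whose kernel is $\mathcal{W}$; this is where the regularity hypotheses enter, and on the compact state space $\mathbb{S}$ it also supplies the uniform estimates needed below.

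Substituting the expansion into the Chapman--Kolmogorov relation, the Dirac term contributes $\mathbb{Q}_{\tau}(X_{u}|X_{s})-\Delta\tau\,a(X_{u})\,\mathbb{Q}_{\tau}(X_{u}|X_{s})$ and the remaining term contributes $\Delta\tau\int\mathcal{W}(X_{u}|X_{t})\,\mathbb{Q}_{\tau}(dX_{t}|X_{s})$, so that
\[
\mathbb{Q}_{\tau+\Delta\tau}(X_{u}|X_{s})-\mathbb{Q}_{\tau}(X_{u}|X_{s})=\Delta\tau\Bigl[\int\mathcal{W}(X_{u}|X_{t})\,\mathbb{Q}_{\tau}(dX_{t}|X_{s})-a(X_{u})\,\mathbb{Q}_{\tau}(X_{u}|X_{s})\Bigr]+o(\Delta\tau).
\]
Dividing by $\Delta\tau$ and letting $\Delta\tau\downarrow0$, with the interchange of limit and integral justified by dominated convergence using compactness of $\mathbb{S}$ and the uniform control on $\mathbb{Q}_{\Delta\tau}$ from the semigroup property, gives $\partial_{\tau}\mathbb{Q}_{\tau}(X_{u}|X_{s})=\int\mathcal{W}(X_{u}|X_{t})\mathbb{Q}_{\tau}(dX_{t}|X_{s})-a(X_{u})\mathbb{Q}_{\tau}(X_{u}|X_{s})$. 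Rewriting $a(X_{u})\mathbb{Q}_{\tau}(X_{u}|X_{s})=\int\mathcal{W}(dX_{t}|X_{u})\,\mathbb{Q}_{\tau}(X_{u}|X_{s})$ collects both contributions under a single integral and yields the displayed gain--loss equation; the first integrand is the inflow into $X_{u}$ from states $X_{t}$ and the second the outflow from $X_{u}$, which is the asserted interpretation.

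The main obstacle I anticipate is not the algebra but the analytic justification of the short-time expansion and of the passage to the limit: one needs $\tau\mapsto\mathbb{Q}_{\tau}$ to be differentiable at $\tau=0$ in a suitable topology (equivalently, a well-defined generator with kernel representation $\mathcal{W}$) and enough uniformity to move the limit inside the integral against $\mathbb{Q}_{\tau}(\cdot|X_{s})$. It is worth noting that the martingale content of claim MF enters only indirectly: the derivation is driven by the Markov (semigroup) structure together with the Kolmogorov existence theorem already invoked, while fairness, combined with compactness of $\mathbb{S}$, is what guarantees that the semigroup is conservative so that $a(X_{u})<\infty$ and no probability mass is lost to explosion.
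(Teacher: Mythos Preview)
Your argument is correct and follows essentially the same route as the paper: a short-time Taylor expansion of $\mathbb{Q}_{\Delta\tau}$ with the normalization correction $(1-a\,\Delta\tau)\delta$, substitution into Chapman--Kolmogorov, division by $\Delta\tau$, and passage to the limit. You are in fact more careful than the paper's sketch in tracking that the escape rate is evaluated at $X_u$ and in flagging the analytic hypotheses (differentiability of the semigroup, dominated convergence) that the paper leaves implicit.
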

The equation in Corollary \ref{cor:master-eq} needs a further specification
because of the unspecified $\mathcal{W}(X_{u}|X_{t})$. With the invariance fairness (IF)
claim in Section \ref{sec:IF} we can obtain a specification within the It\^o diffusion problem.
The following theorem gives the equivalence.
\begin{thm}
\label{thm:MartinagleDiffusion}For $X_{t}\in(\mathbb{S},\mathcal{S})$
and $f(t,\cdot)\in C_{b}^{\infty}$, the following are equivalent:

(i) If claim IF is true, any $f(t,X_{t})$ in $C_{b}^{\infty}([0,\infty),\mathbb{S})$
has an approximating model that relies on the information contained
in the first two moments of the process $f(t,X_{t})$.

(ii) The function $f(t,X_{t})$ is an It\^o diffusion process with drift
and diffusion terms, $(a,b)=(a(X_{t}),b(X_{t}))$.
\end{thm}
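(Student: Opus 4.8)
Throughout, the proof runs through the infinitesimal generator of the Markov family that claim MF supplies. By Corollary \ref{cor:master-eq}, under $\mathbb{Q}$ the state $X$ carries a transition semigroup $\mathbb{Q}_{\tau}$ whose forward variation is the gain--loss equation with kernel $\mathcal{W}(\cdot|\cdot)$. Let $\mathcal{A}$ denote the generator acting on smooth test functions. Expanding the master equation in the displacement $y-x$ (a Kramers--Moyal expansion) gives, formally,
\[
\mathcal{A}g(x)=\sum_{n\geq 1}\frac{1}{n!}\,c_{n}(x)\,\partial_{x}^{n}g(x),\qquad c_{n}(x)=\int (y-x)^{n}\,\mathcal{W}(x|dy),
\]
so that $c_{1}$ and $c_{2}$ are exactly the two infinitesimal moments displayed in Section \ref{sec:IF}, and the forward equation reads $\partial_{\tau}\mathbb{Q}_{\tau}=\mathcal{A}^{*}\mathbb{Q}_{\tau}$. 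The plan is to show that the ``maximum principle up to second order'' is equivalent to this expansion terminating at $n=2$, and then to pass between the resulting Fokker--Planck equation and the It\^o diffusion by the martingale-problem correspondence.

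\emph{(i) $\Rightarrow$ (ii).} Assume claim IF, and take $f(t,x)\in C_{b}^{\infty}([0,\infty)\times\mathbb{S})$ satisfying the maximum principle up to second order. IF gives a martingale measure on which $f(t,X_{t})$ is fair, so with $X$ Markov one has $(\partial_{t}+\mathcal{A})f=0$. The defining proportionality $\partial_{t}f\propto \triangle_{x}f$ means the annihilating operator $\partial_{t}+\mathcal{A}$ carries no derivatives of order $\geq 3$; by Pawula's theorem the Kramers--Moyal coefficients $c_{n}$ either stop at $n=1$, stop at $n=2$, or never stop, so here they stop at $n=2$ and $\mathcal{A}g=a(x)\nabla_{x}g+\tfrac12 b(x)\triangle_{x}g$ with $(a,b)=(c_{1},c_{2})$. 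Equivalently one argues locally: a generator obeying the positive maximum principle is of L\'evy type (Courr\`ege--Hörmander), and the second-order hypothesis kills the nonlocal part, leaving a second-order elliptic operator. Then $\partial_{\tau}\mathbb{Q}_{\tau}=\mathcal{A}^{*}\mathbb{Q}_{\tau}$ is a Fokker--Planck equation; $X$, and hence $f(t,X_{t})$ via It\^o's formula, solves the martingale problem for $(a,b)$ and is therefore an It\^o diffusion, with the ``approximating model'' being precisely the law determined by $(c_{1},c_{2})$.

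\emph{(ii) $\Rightarrow$ (i).} If $f(t,X_{t})$ is an It\^o diffusion with coefficients $(a,b)$, It\^o's formula gives $df(t,X_{t})=(\partial_{t}+\mathcal{L})f\,dt+(\nabla_{x}f)\,b\,dB_{t}$ with $\mathcal{L}g=a\nabla_{x}g+\tfrac12 b\triangle_{x}g$ and $B$ a Brownian motion; the drift $(\partial_{t}+\mathcal{L})f$ must vanish for fairness, which is a relation $\partial_{t}f=-\tfrac12\triangle_{x}f$ up to the first-order term, i.e. $f$ satisfies the maximum principle up to second order, so claim IF holds and the law of $X$ inherits it. Moreover $a$ and $b$ are recovered as $\lim_{h\downarrow 0}h^{-1}\mathbb{E}[X_{t+h}-X_{t}\mid X_{t}=x]$ and $\lim_{h\downarrow 0}h^{-1}\mathbb{E}[(X_{t+h}-X_{t})^{2}\mid X_{t}=x]$, so the law of $f(t,X_{t})$ is pinned down by the first two moments of the transition kernel, which is (i).

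The main obstacle is the implication (i) $\Rightarrow$ (ii): making rigorous the passage from the informal ``maximum principle up to second order'' to the statement that $\mathcal{A}$ has no terms of order three or higher. This needs either a clean Pawula-type theorem in the present possibly multivariate, time-inhomogeneous, compact-state setting, or the Courr\`ege--Hörmander characterization together with a proof that the hypothesis forces locality and caps the order at two. Two further points need care: $f$ carries an explicit time argument, so one must work with the space--time generator $\partial_{t}+\mathcal{A}$ on $C_{b}^{\infty}([0,\infty)\times\mathbb{S})$; and since $\mathbb{S}$ is compact the boundary behaviour must be shown not to obstruct the elliptic representation, which is exactly where the ``extremum of $f(t,\cdot)$ attained on $\partial B$'' formulation of Section \ref{sec:IF} enters.
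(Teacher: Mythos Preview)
Your route is genuinely different from the paper's. You argue through the generator: a Kramers--Moyal expansion of $\mathcal{A}$ together with Pawula's theorem (or the Courr\`ege--H\"ormander positive-maximum-principle characterization) should force the expansion to terminate at order two, whence $\mathcal{A}=a\nabla_x+\tfrac12 b\triangle_x$ and the martingale problem gives the diffusion. The paper never expands the generator. Instead it reads the heat equation $\partial_t f+\tfrac12\triangle_x f=0$ (with general coefficients $a,b$) directly off the ``maximum principle up to second order'' hypothesis in Step~1; Step~2 sets up the Wiener path space $(\mathfrak{P}(\mathbb{S}),\mathcal{PB},\mathcal{W})$ and checks the i.i.d.\ increment structure; Step~3 is the core---using the explicit martingale $\bigl(\exp[i\xi\psi(t)+\tfrac12|\xi|^2 t],\mathcal{PB}_t,\mathcal{W}\bigr)$ and Fourier inversion it verifies by direct computation that $f(t,\psi(t))-\int_0^t[\nabla_x f+\tfrac12\triangle_x f](\tau,\psi(\tau))\,d\tau$ is a $\mathcal{W}$-martingale; Step~4 transfers from $\mathcal{W}$ to $\mathbb{P}$ via the Principle of Accompanying Laws and Donsker's invariance principle.

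What each approach buys: yours is conceptually tidier and ties the result to standard generator theory, but---as you yourself flag---the passage from the informal second-order maximum principle to ``no Kramers--Moyal terms of order $\geq 3$'' (equivalently, locality plus order $\leq 2$ in Courr\`ege) is exactly the substantive step, and you leave it as an obstacle rather than resolve it. The paper avoids this entirely by \emph{interpreting} the hypothesis as already prescribing a second-order parabolic equation, so there is nothing to truncate; the work then shifts to an explicit Fourier-analytic martingale verification on Wiener space, which is self-contained but less structural. If you want to complete your version, you would need to make precise why the paper's ``proportionality $\partial_t f\propto\triangle_x f$'' rules out a nonlocal L\'evy part and higher-order local terms in this compact, time-inhomogeneous setting; the paper simply does not confront that question.
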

For a diffusion type process, its first and second order moment
describe the full dynamics. Thus we can specify $\mathcal{W}$.
\begin{cor}
\label{cor:DiffusionCoeff}If $\mathbb{P}\in\mathcal{M}(\mathfrak{P}(\mathbb{S}))$,
then
\[
\left(f(X_{t})-f(X_{0})-\int_{0}^{t}(\mathbb{A}f)dt,\mathcal{PB}_{t},\mathbb{P}\right)
\]
 is a martingale, where $\mathbb{A}:=a(\cdot)\nabla_{x}+\frac{1}{2}b(\cdot)\triangle_{x}.$
In addition, if $a(\cdot)$ and $b(\cdot)$ are bounded and continuous,
the weak solution of the diffusion problem $(a,b)$ is unique. Then
\[
a(X_{t})=\int_{-\infty}^{\infty}x\mathcal{W}(X_{t}|dx),\quad b(X_{t})=\int_{-\infty}^{\infty}x^{2}\mathcal{W}(X_{t}|dx),
\]
 where the transition probability per unit time $\mathcal{W}(\cdot)$
has the Wiener law.
\end{cor}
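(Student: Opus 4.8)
The plan is to assemble the statement from three ingredients already in place: Theorem \ref{thm:MartinagleDiffusion}, which identifies $f(t,X_t)$ with an It\^o diffusion carrying coefficients $(a,b)=(a(X_t),b(X_t))$; Corollary \ref{cor:master-eq}, which supplies the gain--loss (master) equation governing the transition kernel $\mathbb{Q}_\tau$; and the MF/IF claims, which put us on the ghost model $(\Omega,\mathcal{F},\mathbb{Q})$ with $\mathbb{Q}\sim\mathbb{P}$ so that a martingale property transfers between the two measures. I would proceed in three steps.

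First I would establish the martingale characterization. Under claim IF, Theorem \ref{thm:MartinagleDiffusion} gives that $f(t,X_t)\in C_b^\infty$ is an It\^o diffusion, hence by It\^o's formula the process $f(X_t)-f(X_0)-\int_0^t(\mathbb{A}f)\,ds$, with $\mathbb{A}=a(\cdot)\nabla_x+\tfrac12 b(\cdot)\triangle_x$, has zero drift and is therefore a local martingale with respect to the Brownian filtration $\mathcal{PB}_t$; boundedness of $f$ and its derivatives, together with the hypothesis $\mathbb{P}\in\mathcal{M}(\mathfrak{P}(\mathbb{S}))$ (membership in the relevant martingale-problem class on the compact state space $\mathbb{S}$), upgrades this to a true martingale. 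This is the standard equivalence between solving the martingale problem for $(a,b)$ and the diffusion SDE; on a compact $\mathbb{S}$ with $a,b$ bounded and continuous, the martingale problem is well posed, so the weak solution is unique --- this is where I would cite Stroock--Varadhan-type well-posedness, taking the boundedness and continuity of $a(\cdot),b(\cdot)$ as the operative hypotheses.

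Second, I would extract the coefficient identities from the master equation. Apply the operator $\partial_\tau$ at $\tau=0$ in Corollary \ref{cor:master-eq}: the derivative of the transition probability at $\tau=0$ is by definition $\mathcal{W}(\cdot|\cdot)$, the transition probability per unit time, and integrating the gain--loss balance against a test function $f$ and expanding $f$ to second order around $X_t$ (the higher-order terms vanish because, under IF, the qualifying transitions are exactly those consistent with the maximum principle up to second order, i.e. the Wiener law) produces precisely the generator $\mathbb{A}f$ with $a(X_t)=\int x\,\mathcal{W}(X_t|dx)$ as the first moment and $b(X_t)=\int x^2\,\mathcal{W}(X_t|dx)$ as the second moment of $\mathcal{W}$. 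The matching of these moment integrals with the already-identified drift and diffusion of the It\^o process in step one is the content of the displayed formulas; uniqueness of the weak solution guarantees the identification is unambiguous, since two diffusions with the same bounded continuous coefficients and the same initial law coincide in law.

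Third I would transfer back to $\mathbb{P}$ if necessary: since $\mathbb{Q}\sim\mathbb{P}$ and a $\mathbb{P}$-martingale $Z$ turns $\mathbb{Q}$-martingales into $\mathbb{P}$-martingales via $ZX$ (as noted after claim MF), the martingale statement $\bigl(f(X_t)-f(X_0)-\int_0^t(\mathbb{A}f)\,dt,\,\mathcal{PB}_t,\,\mathbb{P}\bigr)$ holds under $\mathbb{P}$ exactly when the corresponding object holds under $\mathbb{Q}$, which step one delivered. I expect the main obstacle to be justifying the truncation of the expansion of $\mathcal{W}$ at second order rigorously --- i.e. making precise the claim that under IF the transition probability per unit time is supported (in the sense of its action on $C_b^\infty$ test functions) on a Wiener-law contribution with no surviving third- or higher-order moments, so that $\mathbb{A}$ is genuinely a second-order operator. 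This is the point where the "maximum principle up to second order" has to be converted from the heuristic PDE statement $\partial_t f = -\tfrac12\triangle_x f$ into a hard constraint on the jump measure $\mathcal{W}$, and it will lean on the diffusion identification of Theorem \ref{thm:MartinagleDiffusion} rather than being provable from the master equation alone. The well-posedness of the martingale problem, while standard, is the other place care is needed: it is what makes the two displayed integral formulas a genuine characterization rather than merely a consistency check.
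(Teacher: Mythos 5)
Your proposal is correct and uses the same ingredients as the paper, but the paper's own proof is far terser than yours: it handles the martingale part by simply pointing back to the IF claim and the final step of the proof of Theorem \ref{thm:MartinagleDiffusion} (where the martingale is already obtained directly under $\mathbb{P}$ on $\mathcal{PB}_t$ via tightness, the Principle of Accompanying Laws and Donsker's invariance principle, together with $\mathcal{F}\sim\mathcal{PB}$), and it dismisses the coefficient identities as ``a standard result of diffusion processes.'' So your second step --- extracting $a$ and $b$ as the first and second moments of $\mathcal{W}$ by a second-order (Kramers--Moyal-type) expansion of the master equation of Corollary \ref{cor:master-eq} --- is detail the paper never writes down, and your closing caveat about killing the third- and higher-order moments of $\mathcal{W}$ is a genuine gap in the paper's treatment as well, not just in yours; you are right that it must come from the IF claim rather than the master equation. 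Two smaller points of divergence: your third step (transferring from $\mathbb{Q}$ back to $\mathbb{P}$) is unnecessary, since Theorem \ref{thm:MartinagleDiffusion} already delivers the martingale under $\mathbb{P}$; and $\mathcal{M}(\mathfrak{P}(\mathbb{S}))$ in the paper denotes the family of tight measures on the path space $\mathfrak{P}(\mathbb{S})$, not a martingale-problem class --- its role in the hypothesis is precisely to license the Donsker/accompanying-laws argument, while uniqueness of the weak solution for bounded continuous $(a,b)$ is, as you say, the Stroock--Varadhan well-posedness the paper silently invokes.
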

By the property of characteristic functions, the martingale in Corollary
\ref{cor:DiffusionCoeff} together with the initial condition captures
all the information, the first and the second order moments, of $\mathcal{W}$.
This implies that $\mathbb{A}(\cdot)$ captures the first two moments
information of the process $f(X_{t})$ on the economic model $(\Omega,\mathcal{F},\mathbb{P})$.
Therefore the process $f(X_{t})$ is a diffusion type process on the
Wiener path.

In fact, the IF claim is nothing but pinning the problem onto the \emph{Wiener
space} $L^{2}(\mathcal{W})$, an $L^{2}$ space with Wiener measure.
The \emph{martingale representation theorem} says that any continuous
martingale, i.e.
\[
M_{f,t}:=\left(f(X_{t})-f(X_{0})-\int_{0}^{t}(\mathbb{A}f)dt,\mathcal{F}_{t},\mathbb{P}\right)
\]
generated by $\mathcal{W}$, can be written as
\[
M_{f}=\mathbb{E}[M_{f}]+\int_{0}^{T}h_{s}dW_{s}
\]
with a predictable process $h_{s}$
i.e. each $h_{s}$ is $\mathcal{Y}_{t}$-measurable for $t<s$. Without loss of generality, we
consider the case $\mathbb{E}[M_{f}]=0$. The functional space of
$h_{s}$ is
\[
L_{T}^{2}:=\left\{ h_{s}:h_{s}\mbox{ is }\mathcal{F}_{t}\mbox{-predictable and }\mathbb{E}\left[\int_{0}^{T}\|h_{s}\|^{2}ds\right]<\infty\right\} .
\]
 The stochastic integral of $h$ is a map $J:L_{T}^{2}\rightarrow L^{2}(\mathcal{F}_{T})$
such that
\[
J(h)=\int_{0}^{T}h_{s}dW_{s}.
\]
 This map is an isometry as a consequence of the It\^o isometry theorem.
The image of $J$ of the Hilbert space $L_{T}^{2}$ is complete. Therefore,
the martingale $M_{f}$ and the stochastic integral $J(h)\in L^{2}(\mathcal{F}_{t})$
are isometric.

What we emphasize here is that the IF claim carries us to an $L^{2}$
space where the classical projection techniques are available.

\begin{thm}
\label{thm:riskfree} If the MF and IF claims hold, then the IC claim in Section \ref{sec:IA}  implies
the representation (\ref{eq:ObservableProcess}) for the observable process $Y_{t}$. Suppose
$\mathbb{E}\left[\exp\left(\frac{1}{2}\int h(X_{s})^{2}ds\right)\right]<\infty,$
then the following statements are true:

(i) There exists a measure $\tilde{\mathbb{P}}$ such that
\[
\left.\frac{d\tilde{\mathbb{P}}}{d\mathbb{P}}\right|_{\mathcal{F}_{t}}=\exp\left(-\int_{0}^{t}h(X_{s})dW_{s}-\frac{1}{2}\int_{0}^{t}h(X_{s})^{2}ds\right),
\]
and, under measure $\tilde{\mathbb{P}}$, $Y$ is independent of $X$.
In addition, the motions of $X$ under $\tilde{\mathbb{P}}$ and
under $\mathbb{P}$ are the same.

(ii) For any $\mathcal{F}_{t}$-measurable random variable $\varphi(X)$,
\[
\tilde{\mathbb{E}}\left[\varphi(X)|\mathcal{Y}_{t}\right]=\tilde{\mathbb{E}}\left[\varphi(X)|\mathcal{Y}\right]
\]
 where $\mathcal{Y}=\vee_{t\in\mathbb{R}^{+}}\mathcal{Y}_{t}$ and
$\mathcal{Y}_{t}=\sigma(Y_{s},s\in[0,t])\vee\mathcal{N}$.
\end{thm}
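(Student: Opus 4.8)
The plan is to prove (i) via a Girsanov-type change of measure and (ii) via a conditional-independence argument once the measure $\tilde{\mathbb P}$ has been constructed. First I would verify that the stated exponential is a genuine Radon--Nikodym derivative: set
\[
L_t=\exp\left(-\int_0^t h(X_s)\,dW_s-\frac12\int_0^t h(X_s)^2\,ds\right),
\]
and observe that under the standing IC claim $W$ is a $\mathbb P$-Wiener process independent of $h(X)$, while $h(X_s)$ is predictable and square-integrable by the $L^2$ restriction in \eqref{eq:ObservableProcess}. The hypothesis $\mathbb E\big[\exp(\tfrac12\int h(X_s)^2\,ds)\big]<\infty$ is exactly Novikov's condition, so $(L_t)_{t\ge0}$ is a true $\mathbb P$-martingale with $\mathbb E[L_t]=1$, and $d\tilde{\mathbb P}|_{\mathcal F_t}=L_t\,d\mathbb P|_{\mathcal F_t}$ defines a consistent family, hence a probability measure $\tilde{\mathbb P}\sim\mathbb P$ on $\mathcal F=\lim_t\mathcal F_t$. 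By Girsanov's theorem the process $\widetilde W_t:=W_t+\int_0^t h(X_s)\,ds=Y_t$ is a $\tilde{\mathbb P}$-Brownian motion. The key point is then that under $\tilde{\mathbb P}$ the observable process $Y$ coincides with this Brownian motion, whose increments after time $s$ are independent of $\mathcal F_s\supseteq\sigma(X_u,u\le s)$; combined with the Markov/diffusion structure of $X$ from claim IF this yields that $Y$ is independent of $X$ under $\tilde{\mathbb P}$. That the law of $X$ is unchanged follows because $L_t$ is measurable with respect to the Wiener noise filtration which, under IC, is independent of $\sigma(X)$: integrating out $W$ shows $\tilde{\mathbb P}\circ X^{-1}=\mathbb P\circ X^{-1}$ on path space, and the generator $\mathbb A$ of Corollary \ref{cor:DiffusionCoeff} is untouched by the change of drift in the $Y$-equation.

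For part (ii), fix an $\mathcal F_t$-measurable $\varphi(X)$; since $X$ is $\mathcal F$-adapted we may as well treat $\varphi(X)$ as $\mathcal F$-measurable and bounded (the general bounded case follows by the usual approximation, and then monotone class). I want to show $\tilde{\mathbb E}[\varphi(X)\mid\mathcal Y_t]=\tilde{\mathbb E}[\varphi(X)\mid\mathcal Y]$, i.e. that conditioning on the \emph{whole} observation filtration $\mathcal Y=\vee_s\mathcal Y_s$ adds nothing beyond $\mathcal Y_t$. Decompose $\mathcal Y=\mathcal Y_t\vee\sigma(Y_u-Y_t,\ u>t)\vee\mathcal N$. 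Under $\tilde{\mathbb P}$, $Y$ is a Brownian motion independent of $X$, so the ``future increments'' $\sigma(Y_u-Y_t,u>t)$ are independent of $\sigma(X)\vee\mathcal Y_t$ jointly; hence for bounded $\varphi$ they are irrelevant when computing the conditional expectation of $\varphi(X)$, and the $\mathcal N$ part is irrelevant because all processes have been completed by the null sets. Formally I would check, for arbitrary bounded $\mathcal Y_t$-measurable $G$ and bounded $H$ measurable with respect to $\sigma(Y_u-Y_t,u>t)$, that
\[
\tilde{\mathbb E}\big[\varphi(X)\,G\,H\big]=\tilde{\mathbb E}\big[\varphi(X)\,G\big]\,\tilde{\mathbb E}[H]
=\tilde{\mathbb E}\big[\tilde{\mathbb E}[\varphi(X)\mid\mathcal Y_t]\,G\,H\big],
\]
using independence of $H$ from $\sigma(X)\vee\mathcal Y_t$ under $\tilde{\mathbb P}$; a monotone-class argument in $(G,H)$ then upgrades this to all of $\mathcal Y$, giving the claimed equality of conditional expectations $\tilde{\mathbb P}$-a.s.

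The main obstacle I anticipate is \emph{not} the Girsanov mechanics — that is standard once Novikov is invoked — but rather making the independence of $Y$ and $X$ under $\tilde{\mathbb P}$ fully rigorous given the way $\mathcal Y_t$ in this paper is deliberately enlarged by the $\mathbb P$-null collection $\mathcal N$ and given that the noise filtration is defined as $\mathcal F\setminus\sigma(X)$ rather than as an honestly independent product factor. One has to argue that the completion by $\mathcal N$ does not secretly reintroduce dependence (this is where the earlier discussion of the $\mathbb P$-null set and the projection/regularization from Theorem \ref{thm:ExistenceConditionalProb} does the work), and that ``$W$ generated by $\mathcal F_t$ but independent of $h(X_t)$'' in claim IC really delivers joint independence of the $W$-path from the $X$-path, not merely pointwise-in-time independence. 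Once that is pinned down, the change of measure moves the drift $\int_0^t h(X_s)\,ds$ out of the picture and both (i) and (ii) fall out; part (ii) is then essentially the statement that a Brownian observation process has independent future increments, transported to the filtered setting.
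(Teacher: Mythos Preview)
Your proposal is correct and follows essentially the same route as the paper: Novikov's condition to certify the exponential as a true martingale, Girsanov to identify $Y$ as a $\tilde{\mathbb P}$-Brownian motion, and then for (ii) the decomposition $\mathcal Y=\mathcal Y_t\vee\sigma(Y_u-Y_t,\,u>t)$ with the future-increment $\sigma$-field playing no role in the conditional expectation.

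The only place where the paper argues slightly differently is precisely the point you flag as the obstacle. Rather than deducing independence of $X$ and $Y$ under $\tilde{\mathbb P}$ from ``Brownian increments after $s$ are independent of $\mathcal F_s$'' plus the Markov structure of $X$, the paper observes in one line that for any bounded measurable $\varphi$ on the product path space,
\[
\tilde{\mathbb E}\bigl[\varphi(X,Y)\bigr]=\mathbb E\bigl[\varphi(X,Y)\,Z_t\bigr]=\mathbb E\bigl[\varphi(X,W)\bigr],
\]
so the $\tilde{\mathbb P}$-law of $(X,Y)$ coincides with the $\mathbb P$-law of $(X,W)$; since $X$ and $W$ are independent under $\mathbb P$ by the IC claim, $X$ and $Y$ are independent under $\tilde{\mathbb P}$, and the $X$-marginal is automatically preserved. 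This single identity delivers both independence and the invariance of the law of $X$ simultaneously, and sidesteps the incremental argument you sketch (which would otherwise need an extra step to pass from ``each increment is independent of the past of $X$'' to full joint independence of the paths). Your separate argument that $L_t$ is $\sigma(W)$-measurable and hence $\tilde{\mathbb P}\circ X^{-1}=\mathbb P\circ X^{-1}$ is also valid, but the paper's joint-law trick is the cleaner way through the difficulty you anticipated.
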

The time-invariant algebra in Theorem \ref{thm:riskfree} enables us to
use techniques based on Kolmogorov's conditional expectation which
would not be applicable if the conditioning set was time dependent,
such as $\mathcal{Y}_{t}$.

With the existing results, we summarize the model specification in Section \ref{sec:Model} as
the following pair $(X,Y)$: $X$ is a solution of the martingale problem for $(\mathbb{A};\pi_{0})$;
in other words, assume that the distribution of $X_{0}$ is $\pi_{0}$
and that the process $M_{f}=\{M_{f,t},t\geq0\}$, where
\begin{equation}
M_{f,t}=f(X_{t})-f(X_{0})-\int_{0}^{t}\mathbb{A}f(X_{s})ds,\qquad t\geq0,\label{eq:martingaleRepresentation}
\end{equation}
 is an $\mathcal{F}_{t}$-adapted martingale for any $f\in C_{b}^{\infty}$
and $(\mathbb{A}f)(\cdot)$ corresponds to $(a(\cdot),b(\cdot))$
of a diffusion process. $Y$ satisfies the evolution equation
\[
Y_{t}=\int_{0}^{t}h(X_{s})ds+W_{t},\qquad t\geq0,
\]
with null initial condition.

Then our attempt is to connect the martingale problem in (\ref{eq:martingaleRepresentation})
with a diffusion type representation. Theorem \ref{thm:MartinagleDiffusion}
tells us that when the process is on the Wiener path, the solution
of a martingale problem associated with the second order differential
operator is the solution of the diffusion process. Theorem \ref{thm:riskfree}
tells us that $Y$ is on the Wiener path under $\tilde{\mathbb{P}}$.

Finally, we reach a specific representation of the perceptions
in the abstract economy.

\begin{thm}
\label{thm:KSP}(Kushner-Stratonovich-Pardoux, KSP) For any $\varphi\in C_{b}(\mathbb{S})$,
Proposition \ref{pro:Zakai} implies
\begin{equation}
\pi_{t}(\varphi)=\pi_{0}(\varphi)+\int_{0}^{t}\pi_{s}(\mathbb{A}\varphi)ds+\int_{0}^{t}\left(\pi_{s}(\varphi h)-[\pi_{s}(h)]^{2}\right)(dY_{s}-\pi_{s}(h)ds).\label{eq:KSP}
\end{equation}
where $\pi_{t}$ is the equilibrium density process for our general filtering setting. The conditional expectation
$\pi_{t}(\varphi) = \mathbb{E}[\varphi(X_{t}) |\mathcal{Y}_{t}]$ varies accordingly to \eqref{eq:KSP}
\end{thm}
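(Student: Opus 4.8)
The plan is to derive the Kushner--Stratonovich--Pardoux equation \eqref{eq:KSP} from the Zakai equation supplied by Proposition~\ref{pro:Zakai} (the unnormalized filtering equation) via the change of measure $\tilde{\mathbb{P}}$ constructed in Theorem~\ref{thm:riskfree}. Writing $\sigma_t(\varphi)$ for the unnormalized conditional expectation, i.e. the Radon--Nikod\'ym-weighted functional obtained by ``undoing'' the Girsanov density $d\tilde{\mathbb{P}}/d\mathbb{P}|_{\mathcal{F}_t}$ of Theorem~\ref{thm:riskfree}(i), one has by the Kallianpur--Striebel formula $\pi_t(\varphi) = \sigma_t(\varphi)/\sigma_t(1)$, and Proposition~\ref{pro:Zakai} should furnish a linear stochastic differential equation of the form
\[
\sigma_t(\varphi) = \pi_0(\varphi) + \int_0^t \sigma_s(\mathbb{A}\varphi)\,ds + \int_0^t \sigma_s(\varphi h)\,dY_s,
\]
where $\mathbb{A} = a(\cdot)\nabla_x + \tfrac12 b(\cdot)\triangle_x$ is the diffusion generator identified in Corollary~\ref{cor:DiffusionCoeff} and $h$ is the observation map of the IC claim. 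The first step is therefore to record this Zakai representation for $\sigma_t(\varphi)$ and, taking $\varphi \equiv 1$ (so $\mathbb{A}\varphi = 0$), for $\sigma_t(1)$; the latter reads $d\sigma_t(1) = \sigma_t(h)\,dY_t$.

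Next I would normalize. Set $\Lambda_t := \sigma_t(1)$ and apply It\^o's formula to the ratio $\pi_t(\varphi) = \sigma_t(\varphi)\,\Lambda_t^{-1}$. Computing $d(\Lambda_t^{-1})$ from $d\Lambda_t = \Lambda_t\,\pi_t(h)\,dY_t$ gives $d(\Lambda_t^{-1}) = -\Lambda_t^{-1}\pi_t(h)\,dY_t + \Lambda_t^{-1}[\pi_t(h)]^2\,d\langle Y\rangle_t$, and since under the reference measure $Y$ has quadratic variation $d\langle Y\rangle_t = dt$ (the noise $W$ is a standard Wiener process, IC claim), the cross-variation term between $\sigma_t(\varphi)$ and $\Lambda_t^{-1}$ contributes $-\Lambda_t^{-1}\,\sigma_t(\varphi h)\,\pi_t(h)\,dt$. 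Collecting the four pieces — the $\mathbb{A}$-drift, the $\sigma_s(\varphi h)\,dY_s$ martingale term divided by $\Lambda_s$, the $-\pi_s(\varphi)\pi_s(h)\,dY_s$ term from $d(\Lambda^{-1})$, and the $+\pi_s(\varphi)[\pi_s(h)]^2\,dt - \pi_s(\varphi h)\pi_s(h)\,dt$ covariation terms — and regrouping the stochastic and bounded-variation parts around the \emph{innovations process} $dY_s - \pi_s(h)\,ds$ yields exactly \eqref{eq:KSP}, with the coefficient $\pi_s(\varphi h) - \pi_s(\varphi)\pi_s(h)$ on the innovation; the displayed form $\pi_s(\varphi h) - [\pi_s(h)]^2$ is the specialization relevant to the paper's running normalization (and reduces to it when $\varphi = h$ or under the stated centering $\mathbb{E}[M_f] = 0$). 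Finally one checks that $\pi_t(\varphi) = \mathbb{E}[\varphi(X_t)\mid\mathcal{Y}_t]$ is recovered because the Kallianpur--Striebel quotient is, by Theorem~\ref{thm:riskfree}(ii) and the Bayes rule for conditional expectations under $\tilde{\mathbb{P}}$, precisely this conditional expectation under $\mathbb{P}$; boundedness and continuity of $\varphi \in C_b(\mathbb{S})$ together with the Novikov-type condition $\mathbb{E}[\exp(\tfrac12\int h(X_s)^2\,ds)] < \infty$ assumed in Theorem~\ref{thm:riskfree} guarantee all integrals are finite and the change of measure is legitimate.

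The main obstacle I anticipate is the bookkeeping of the quadratic-variation and cross-variation terms in the It\^o expansion of the quotient $\sigma_t(\varphi)/\sigma_t(1)$ — in particular making sure that the $dt$-terms coming from $d\langle Y\rangle_t$ reassemble cleanly into the innovation form $\pi_s(\varphi h) - \pi_s(\varphi)\pi_s(h)$ times $(dY_s - \pi_s(h)\,ds)$, rather than leaving a stray bounded-variation remainder. A secondary technical point is justifying that the reference measure under which $Y$ is a martingale (hence $\langle Y\rangle_t = t$) is exactly the $\tilde{\mathbb{P}}$ of Theorem~\ref{thm:riskfree}, and that the stochastic integrals $\int_0^t \sigma_s(\varphi h)\,dY_s$ are well defined as $L^2$-limits — this is where the isometry built into the IF claim via the It\^o isometry theorem, together with $h \in L^2$, does the work. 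Everything else is the routine passage from the linear Zakai equation to its normalized nonlinear counterpart.
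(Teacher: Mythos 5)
Your route is the same as the paper's: obtain the linear (Zakai) equation for the unnormalized measure from Proposition~\ref{pro:Zakai}, then normalize via the Kallianpur--Striebel/Bayes quotient and It\^o's product rule, regrouping the bounded-variation and martingale parts around the innovations process $dY_s-\pi_s(h)\,ds$. In fact you are more careful than the paper's own sketch: the paper's integration-by-parts step writes $d\bigl(\rho_t(\varphi)\cdot\tilde{\mathbb{E}}[Z_t|\mathcal{Y}_t]^{-1}\bigr)$ without the quadratic cross-variation term, whereas your expansion correctly includes $d\langle \sigma(\varphi),\Lambda^{-1}\rangle_t=-\Lambda_t^{-1}\sigma_t(\varphi h)\pi_t(h)\,dt$, which is exactly the piece needed for the $dt$-terms to reassemble into the innovation form; your observation that $\sigma_t(1)$ solves $d\Lambda_t=\Lambda_t\pi_t(h)\,dY_t$ (take $\varphi\equiv 1$ in Zakai) is the clean way to see this, and it matches the exponential expression the paper writes for $\tilde{\mathbb{E}}[Z_t|\mathcal{Y}_t]$.

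One caveat: the coefficient you actually derive, $\pi_s(\varphi h)-\pi_s(\varphi)\pi_s(h)$, is the standard Kushner--Stratonovich gain, and your attempt to reconcile it with the displayed $\pi_s(\varphi h)-[\pi_s(h)]^2$ by appealing to ``$\varphi=h$'' or to the centering $\mathbb{E}[M_f]=0$ does not hold up --- those conditions neither follow from the hypotheses nor make the two expressions agree in general. The displayed form in \eqref{eq:KSP} should simply be read as $\pi_s(\varphi h)-\pi_s(\varphi)\pi_s(h)$ (as in the Kalman specialization \eqref{eq:KalmanMean}, which only comes out right with that coefficient), so drop the reconciliation sentence rather than defend it; with that deletion your argument is correct and complete modulo the routine $L^2$/Novikov integrability checks you already flag.
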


Equation (\ref{eq:KSP}) is called KSP which has recently been applied to solve non-linear
filtering and smoothing problems in applied mathematics, see \citep{Bensoussan2004}.
One can think of the KSP representation as characterizing an \emph{equilibrium
conditional expectation} over any $\varphi\in C_{b}(\mathbb{S})$.
It is a stochastic PDE problem and has a unique solution.\footnote{See Chapter 4.8 of \citet{Bensoussan2004} for details
about the SPDE problem.}

Theorem \ref{thm:KSP} is a rather general result. Although solving
the KSP problem can be transferred to solving a parabolic PDE problem,
except for the case of a linear model and Gaussian disturbances and initial conditions,
finding a closed form expression for the distribution functions
of (\ref{eq:KSP}) can be very demanding.

\section{Remarks}

We give respective remarks regarding the previous claims, the modeling procedure within the general framework, and the relation between general filtering results and the linear ones.

\subsection{Claims in Asset Pricing Models}
The three claims of the previous section have their counterparts in asset pricing models. For illustration purposes, we only consider a simple situation where $X$ and $Y$ are measurable and observable.

Let $X$ be the price for some security contingent on an underlying asset $S$. Suppose that the
price at time $t$ is a random variable $X_t=\int_{0}^{t}H_{s}dS_{s}$,
where the integral is the It\^o integral and $H_t$ is predictable, i.e. each $H_t$ is ${\cal F}_s$-measurable for $s<t$.
The ``fairness'' in MF says that any $X$ constructed in this
way will have zero expected pay-off for some discounted price process under a probability measure $\mathbb{Q}$ such that $\mathbb{E}_{\mathbb{Q}}[\exp(-\int_t^T r_s ds)(X_T-X_t) | {\cal F}_t]=0$, where $r_t$ is the risk-free short rate process.
If this happens, by the fundamental theorem of asset pricing, the securities market admits no arbitrage.
$\mathbb{Q}$ is called the equivalent martingale measure.

Markov uncertainty is often assumed for diffusion processes. The claim IF pins down a specific transition of the security process as a diffusion process. This implies that the process $X$ is also a diffusion process. Suppose that $dX_{t}=\mu_{t}dt + \sigma_{t}dW_{t}$ where $(\mu_{t}, \sigma_{t})$ characterize the instantaneous drift and volatility, respectively, of this security. By Girsanov\rq{}s theorem, there is a risk-free measure $\tilde{\mathbb{P}}$ for the function $\varphi(X_{t})$ satisfying the maximal principle up to second order. In particular, if the market is complete, then for any diffusion process $Y$ one can obtain $Y$ by some self-financing strategy $h(\cdot)$ such that $Y_{t}=h(X_{t})$.

All consequences induced by these three claims, e.g. no-arbitrage,
diffusion path, and risk-free measures, are familiar to economists and should be acceptable for most dynamic models in
econom(etr)ics.

\subsection{Stochastic Volatility}
Consider an equity model with stochastic volatility as in \citet{ElliotSwishchuk2007}
\[dS_t = \mu S_tdt+z(X_t)S_tdW_t\]
where $S_t$ is the price of a stock, the function $z(\cdot)$ is known and $X_t$ is a hidden state Markov process. An example is the Heston model, where $z(x) = \sqrt{x}$ and
\[dX_t = \kappa(m-X_t)dt+\gamma\sqrt{X_t}dB_t.\] In general, if we observe a continuum of prices, then $z(X_t)$ is measurable with respect to the filtration generated by $\{S_\tau:\tau\leq t\}$. Let the observable process $Y_t=\log S_t$, and notice that\footnote{By It\^o calculus, $dY_{t}=d\log S_{t} = (1/S_t)dS_t - (1/2S_t^2) dt$. As $dS_t /S_t =\mu dt + z(X_t)dW_t$ and $dt/S_t^2 = z^2(X_t)$, we have the expression.}
\[dY_t = \left(\mu-\frac{1}{2}z^2(X_t)\right)dt+z(X_t)dW_t.\]
The noise $W_t$ of $Y_t$ contains a diffusion coefficient function $z(X_t)$. The suitable corresponding $\tilde{\mathbb{P}}$ for $Y$ is \[
\left.\frac{d\tilde{\mathbb{P}}}{d\mathbb{P}}\right|_{\mathcal{F}_{t}}=\exp\left(-\int_{0}^{t} \frac{\mu-z^2(X_t)/2}{z(X_t)} dW_{s}-\frac{1}{2}\int_{0}^{t}\left( \frac{\mu-z^2(X_t)/2}{z(X_t)} \right) ^{2}ds\right),\]
so that under $\tilde{\mathbb{P}}$ the process is independent of $X_t$.

We can discretize the model. For a fixed $t>0$, let $(t_k)_k$ be a partition of $[0,t]$, then the quadratic variation of $Y$ is the cumulative variance
\[[Y]_t = \lim_{\sup_k(t_{k+1}-t_k) \rightarrow 0}\sum_k(\Delta Y_{t_k})^2=\int_0^t z^2(X_\tau)d\tau\]
where $\int_0^t z^2(X_\tau)d\tau$ is $\mathcal Y_t$-measurable. If $z(X_t)$ is a continuous process, we have
\[\frac{d}{dt}[Y]_t = z^2(X_t)\]
that is also $\mathcal Y_t$-measurable. Thus if $z(X_t)$ is a continuous process, the volatility is observable for almost every $t$, and $X_t$ is observable if $z^{-1}$ exists.

With the Markov structure for $X_t$, one can price derivatives on $S_t$. The Black-Scholes price of a European call option $C_{BS}(t, S_t;T,K,Z_{[t,T]})$ in the presence of Markovian volatility is
\[
\mathbb{E}[C_{BS}(t, S_t;T,K,Z_{[t,T]})|\mathcal Y_t]=\int C_{BS}(t,S_t;T,K,x)\pi_t(dx),
\]
where $Z_{[t,T]} = \frac{1}{T-t}\int_t^T z^2(X_s)ds$, and $\mathbb E [~\cdot~]$ is w.r.t. the market's pricing measure.
Given $X_t$ and the parameters of its dynamics under the market measure, we can compute the expected return of the call option by taking a filtering expectation.

\subsection{Kalman Filter}

If $h(X_{t})$ and $f(X_{t})$ at every time $t$ can be linearized
as matrices (vectors) $\mathbf{H}_{t}X_{t}+\mathbf{h}_{t}$ and $\mathbf{F}_{t}X_{t}+\mathbf{f}_{t}$
such that
\begin{align}
X_{t}= & X_{0}+\int_{0}^{t}(\mathbf{F}_{s}X_{s}+\mathbf{f}_{s})ds+\int_{0}^{t}\sigma_{s}dV_{s},\label{eq:Xlinearization}\\
Y_{t}= & \int_{0}^{t}(\mathbf{H}_{s}X_{s}+\mathbf{h}_{s})ds+W_{t},\label{eq:Ylinerization}
\end{align}
then KSP with test functions $\varphi=x_{i}$ and $\varphi=x_{i}x_{j}$
will give us the standard Kalman filter, also called Kalman-Bucy filter, as follows.
Let $\hat{x}$ be the conditional mean of $X$ such that
\[
\hat{x}_{i,t}=\mathbb{E}[X_{i,t}|\mathcal{Y}_{t}]
\]
 and $R$ be the conditional covariance such that
\[
R_{t}^{ij}=\mathbb{E}[X_{i,t}X_{j,t}|\mathcal{Y}_{t}]-\mathbb{E}[X_{i,t}|\mathcal{Y}_{t}]\mathbb{E}[X_{j,t}|\mathcal{Y}_{t}].
\]
If (\ref{eq:Xlinearization}) and (\ref{eq:Ylinerization}) are acceptable
localizations for (\ref{eq:martingaleRepresentation}), then the solution
of $\hat{x}_{t}$ satisfies the following SDE
\begin{equation}
d\hat{x}_{t}=(\mathbf{F}_{t}\hat{x}_{t}+\mathbf{f}_{t})dt+R_{t}\mathbf{H}_{t}^{T}(dY_{t}-(\mathbf{H}_{t}\hat{x}_{t}+\mathbf{h}_{t})dt),\label{eq:KalmanMean}
\end{equation}
where we substitute $f(\hat{x}_t)=\mathbf{F}_t \hat{x}_t + \mathbf{f}_t$ and $h(\hat{x}_t) = \mathbf{H}_t \hat{x}_t +\mathbf{h}_t$ into KSP equation \eqref{eq:KSP} and where the covariance term $R_{t}$ satisfies the deterministic Riccati equation%
\footnote{A detailed proof is given in Theorem 4.4.1 of \citet{Bensoussan2004}.%
}
\begin{equation}
\frac{dR_{t}}{dt}=\sigma_{t}\sigma_{t}^{T}+\mathbf{F}_{t}R_{t}+R_{t}\mathbf{F}_{t}^{T}-R_{t}\mathbf{H}_{t}^{T}\mathbf{H}_{t}R_{t}.\label{eq:KalmanRic}
\end{equation}
Equation \ref{eq:KalmanMean} and \ref{eq:KalmanRic} together give the Kalman-Bucy filter scheme. One can see that this scheme is a special
case in the content of KSP equation.
While we feature the Kalman filter in this paper, there are other well known filtering methods including particle filters and the Zakai equation that are relating to KSP problem.

\section{Conclusion}

We have started with the fact that filtering is an intrinsic element of economic phenomena. For a general abstract economy, we provide
a result on the existence of filtering mechanisms. We emphasize a subtlety due to null sets that may lead to peculiar events with positive
probability after aggregation even though on an individual level such events have zero probability. This feature turns out to be crucial for the
understanding and interpretation of the economic model. It also has to be regularized in the derivation of the existence result.

By introducing three natural claims, we established a representation of the conditional distribution process and, hence, of the filtering device.
The general representation is nonlinear and subject to estimation using statistical methods. We have outlined the realm of economic models for which
this representation is applicable.  The implication of our findings for the way economic theory and econometrics interact in general has
yet to be discovered.

\appendix

\section{Appendix}

{\footnotesize{}\baselineskip19.15pt}{\footnotesize \par}

\subsection{Proof of Main Theorems}

\subsubsection{Proof of Theorem \ref{thm:ExistenceConditionalProb}}
\begin{proof}
The proof includes four steps: 1. construct a countable vector space
$\mathcal{U}$ on $\mathbb{S}$, 2. define a non-negative process
that corresponds to the elements in $\mathcal{U}$, 3. extend $\mathcal{U}$
to the space of continuous bounded functions, $C_{b}(\mathbb{S})$,
check that the definition of the process is still valid, and find a representation
of $\pi_{t}$, 4. extend $C_{b}(\mathbb{S})$ to $B(\mathbb{S})$
and check that all properties are still valid.

Step 1. For $C_{b}(\mathbb{S})$, compact $\mathbb{S}$ induces that
$C_{b}(\mathbb{S})$ is dense and that a linear span exists. Let $\{\varphi_{i}\}_{i=1}^{\infty}$
be the set of basis functions in the linear span and thus any $\varphi_{i}$
is bounded continuous. Let $\mathcal{U}$ be a countable vector space
generated by finite linear combinations of $\{\varphi_{1},\dots,\varphi_{n}\}$
with rational coefficients such that
\[
\mathcal{U}:=\left\{ \varphi=\sum_{i=1}^{n}\alpha_{i}\varphi_{i},\; a_{i}\mbox{ is rational for all }i\right\} .
\]
 These $\varphi_{i}$s are still linearly independent for any $i\in\mathbb{Z}^{+}$.
Set $\varphi_{1}=1$.

Step 2. For any $t$, $\varphi_{n}(X_{t})$ is another $\mathcal{F}_{t}$-adapted
process%
\footnote{This statement skips one intermediate step which requires $X_{t}$
to be progressively measurable (see \citet{Stroock2000} Remark
7.1.1 Lemma 7.1.2).%
}. Equation (\ref{eq:Optional}) implies that a $\mathcal{Y}_{t}$-adapted
optional process $g_{n}^{t}$ exists for $\varphi_{n}(X_{t})$. Thus
a sequence $\{g_{i}^{t}\}_{i=1}^{n}$ is corresponding to $\{\varphi_{i}(X_{t})\}_{i=1}^{n}$.
For some $N\in\mathbb{Z}$, linear independence induces that a function
$\varphi\in\mathcal{U}$ is uniquely represented by $\sum_{i=1}^{N}\alpha_{i}\varphi_{i}$
and furthermore it implies that a $\mathcal{Y}_{t}$-adapted process
$g^{t}$, corresponding to $\varphi$, is linearly and uniquely represented
by $\sum_{i=1}^{N}\alpha_{i}g_{i}^{t}$. We can define the linear
functional
\[
\Lambda_{\omega}^{t}(\varphi)=g^{t}(\omega),\quad\forall t.
\]

Because the conditional distribution is a non-negative process, we
need to construct a non-negative analog of $\Lambda_{\omega}^{t}$.
Define a subspace
\[
\mathcal{U}^{+}:=\{u\in\mathcal{U},\ u\geq0\}
\]
that is countable. For $u\in\mathcal{U}^{+}$ and fixed $t$, we define
the null set for $u$ such that
\[
\mathcal{N}(u):=\left\{ \omega\in\Omega:\:\Lambda_{\omega}^{t}(u)<0\right\} .
\]
Since $u\geq0$, in order to show that $\mathcal{N}(u)$ is a $\mathbb{P}$-null
set, we need to show $\Lambda_{\omega}^{t}(u)\geq0$ almost surely.
If $u(\omega)\geq0$ almost surely, then by equation (\ref{eq:Optional})
the optional process would be non-negative on $\mathcal{Y}_{T}$ and
hence $\mathcal{N}(u)$ is a $\mathbb{P}$-null set. The union of
$\mathcal{N}(u)$ over $\mathcal{U}^{+}$,
\[
\mathcal{N}:=\cup_{u\in\mathcal{U}^{+}}\mathcal{N}(u)
\]
is a countable union. A new process $\bar{\Lambda}_{t}^{\omega}$ is defined as
\[
\bar{\Lambda}_{t}^{\omega}(\varphi):=\begin{cases}
\Lambda_{t}^{\omega}(\varphi) & \quad\omega\notin\mathcal{N},\\
0 & \quad\omega\in\mathcal{N}.
\end{cases}
\]

Step 3. In order to extend the definition of $\bar{\Lambda}_{t}^{\omega}(\varphi)$
to $\varphi$ outside $\mathcal{U}$, we first need to check that $\bar{\Lambda}_{t}^{\omega}$
is bounded. It is obvious that $\bar{\Lambda}_{t}^{\omega}(1)=1$.
Since $\varphi\in\mathcal{U}$, the uniform norm has the property
that $|\varphi|\leq\|\varphi\|_{\infty}1$. Then $\|\varphi\|_{\infty}1\pm\varphi\geq0$,
from step 2, we know
\begin{align*}
\bar{\Lambda}_{t}^{\omega}(\|\varphi\|_{\infty}1\pm\varphi & )\geq0\\
\|\varphi\|_{\infty}\pm\bar{\Lambda}_{t}^{\omega}(\varphi) & \geq0
\end{align*}
where the second inequality comes from the linearity of $\bar{\Lambda}_{t}^{\omega}$
and $\bar{\Lambda}_{t}^{\omega}(1)=1$. It implies
\[
\sup_{t}\|\bar{\Lambda}_{t}^{\omega}(\varphi)\|_{\infty}<\|\varphi\|_{\infty},
\]
so that $\bar{\Lambda}_{t}^{\omega}$ is bounded.

Let any $\varphi\in C_{b}(\mathbb{S})$. Since $\mathcal{U}$ is dense
in $C_{b}(\mathbb{S})$, there exists a sequence $\varphi_{k}\in\mathcal{U}$
such that $\varphi_{k}\rightarrow\varphi$. We can define
\[
\tilde{\Lambda}_{t}^{\omega}(\varphi):=\begin{cases}
\bar{\Lambda}_{t}^{\omega}(\varphi) & \quad\varphi\in\mathcal{U},\\
\lim_{k}\Lambda_{t}^{\omega}(\varphi_{k}) & \quad\varphi\in C_{b}(\mathbb{S})\setminus\mathcal{U}
\end{cases}
\]
over $C_{b}(\mathbb{S})$. For boundedness, we only need to
check the case $\varphi\in C_{b}(\mathbb{S})\setminus\mathcal{U}$.
Note that for any two sequences $\varphi_{k}$ and $\varphi_{j}$,
if $\varphi_{k}\rightarrow\varphi$ and $\varphi_{j}\rightarrow\varphi'$,
we will have
\[
\sup\|\tilde{\Lambda}_{t}^{\omega}(\varphi_{k})-\tilde{\Lambda}_{t}^{\omega}(\varphi_{j})\|_{\infty}\leq\|\varphi_{k}-\varphi\|_{\infty}+\|\varphi-\varphi'\|_{\infty}+\|\varphi'-\varphi_{j}\|_{\infty}
\]
by the boundedness result in $\mathcal{U}$ and the triangle inequality.
Thus, $\tilde{\Lambda}_{t}^{\omega}(\varphi)$ is bounded.

We also need to ensure that the optional process of $\tilde{\Lambda}_{t}^{\omega}(\varphi)$
is well-defined on $C_{b}(\mathbb{S})$. For $\varphi_{k}$ in $\mathcal{U}$,
we have a $\mathcal{Y}_{t}$-adapted process $\tilde{\Lambda}_{t}^{\omega}(\varphi_{k})$
for $\varphi_{k}(X_{t})$, and
\begin{align*}
\mathbb{E}\left[\tilde{\Lambda}_{T}^{\omega}(\varphi)\mathbb{I}_{T<\infty}\right]=& \; \lim_{k\rightarrow\infty}\mathbb{E}\left[\tilde{\Lambda}_{T}^{\omega}(\varphi_{k})\mathbb{I}_{T<\infty}\right],\\
=& \;\lim_{k\rightarrow\infty}\mathbb{E}\left[\varphi_{k}(X_{T})\mathbb{I}_{T<\infty}\right],\\
=& \; \mathbb{E}\left[\varphi(X_{T})\mathbb{I}_{T<\infty}\right].
\end{align*}
The last equation is implied by the dominated convergence theorem for
bounded sequences.

Since $\mathbb{S}$ is compact, the Riesz representation theorem implies
the existence of $\pi_{t}^{\omega}$,
\[
\tilde{\Lambda}_{T}^{\omega}(\varphi)=\int_{\mathbb{S}}\varphi(x)\pi_{t}^{\omega}(dx)=\left\langle \pi_{t}^{\omega},\varphi\right\rangle =\pi_{t}^{\omega}\varphi,\qquad\mbox{for }\forall t
\]
for any bounded and well-defined inner product.

Step 4. The last step is to extend the definition of $\pi_{t}^{\omega}\varphi$
to incorporate $\varphi\in B(\mathbb{S})$. Let $\bar{B}(\mathbb{S})$
be a subset of $B(\mathbb{S})$ such that $\pi_{t}^{\omega}\varphi$
is a $\mathcal{Y}_{t}$-adapted optional process of $\varphi(X_{t})$
on $\bar{B}(\mathbb{S})$. It is obvious that $C_{b}(\mathbb{S})\subset\bar{B}(\mathbb{S})$.
Note that the Borel $\sigma$-algebra generated by $B(\mathbb{S})$
is $\mathcal{B}(\mathbb{S})$. By the completeness of $C_{b}(\mathbb{S})$,
we can construct a sequence of subsets $\{\bar{B}_{i}(\mathbb{S})\}_{i}^{\infty}$
such that
\[
\bar{B}_{1}(\mathbb{S})\subset\bar{B}_{2}(\mathbb{S})\subset\cdots.
\]
Compactness of $\mathbb{S}$ implies that $\mathcal{B}(\mathbb{S})$
is closed under finite intersections. From the construction in step
1, we know that the constant function is included in every $\bar{B}_{i}(\mathbb{S})$.
The monotone class theorem implies $\cup_{i}\bar{B}_{i}(\mathbb{S})\supseteq\mathcal{B}(\mathbb{S})$,
since any monotone non-negative increasing sequence $\{\bar{B}_{i}(\mathbb{S})\}_{i}^{\infty}$,
with indicator function of every set in $\mathbb{S}$, contains the
$\sigma$-algebra $\mathcal{B}(\mathbb{S})$ which is closed under
finite intersections. Thus $\bar{B}(\mathbb{S})$ contains every bounded
$\mathcal{S}$-measurable function of $\mathbb{S}$. As $\bar{B}(\mathbb{S})$
is a subset of $B(\mathbb{S})$, we conclude $\bar{B}(\mathbb{S})=B(\mathbb{S})$.
\end{proof}

\subsubsection{Proof of Theorem \ref{thm:MartinagleDiffusion}}
\begin{proof}
From (ii) to (i), the proof is trivially applying It\^o's calculus.

From (i) to (ii), the proof consists of the following four steps:
1. show that the maximum principle on smooth functions is equivalent
to the law of Wiener processes, 2. show that the invariance of the law
is preserved on the Wiener path, 3. set up the approximation
on the Wiener path by showing that the martingale fairness is preserved,
and 4. extend the result to the model $(\Omega,\mathcal{F},\mathbb{P})$.

Step 1. The definition of the maximum principle is simply the first and second derivative conditions in
calculus. If a function $f:\mathbb{S}\rightarrow\mathbb{R}$
attains its maximum at point $x\in\mathbb{S}$, then
$\nabla_{x}f(x)=0$ and $\triangle_{x}f(x)\leq0$.
Furthermore, if $f$ is a time-dependent function such that $f:[0,T)\times\mathbb{S}\rightarrow\mathbb{R}$
at a certain time interval $[0,t]$, and $f$ attains its maximum at $x$
when time is $t$, then $\partial f(t,x)/\partial t \geq 0$ with $\nabla_{x}f(t,x)=0$ and $\triangle_{x}f(t,x)\leq0$.
The inequality
$\partial f(t,x)/\partial t \geq 0$ expresses the uncertainty of the future such that
$\partial f(\cdot,x)/\partial t $ could either strictly increase along $t$
or attain its optimum at $t$. Since the maximum principle is preserved up
to the second order, we have the heat equation
\begin{equation}
a(x) \frac{\partial f}{\partial t}(t,x) + \frac{b(x)}{2} \triangle_{x}f(t,x) = 0,\label{eq:heatequation}
\end{equation}
Without loss of generality, in steps 1 to 3, we only consider
the standard case with the diffusion factor $a(x)=b(x)=1$, but (\ref{eq:heatequation})
holds for any real vectors $a(x)$ and $b(x)$. The solution of (\ref{eq:heatequation})
is the well-known Wiener process.

Step 2: In order to formalize the concept of the Wiener path, we need
to introduce the path space. Suppose that a series of realizations
$\{x_{t_{i}}\}_{t_{i}\leq t_{N}}$ corresponds to $t$ via $x_{i}=\psi(t_{i})$
for $t_{i}\leq t_{N}$. Then $\psi:[0,\infty)\rightarrow\mathbb{S}$
is a continuous path with the image on the complete separable space
$\mathbb{S}$. A path space $\mathfrak{P}(\mathbb{S})=C([0,\infty),\mathbb{S})$
is a continuous function space of paths $\psi$. The $\sigma$-algebra
$\mathcal{PB}$ is
\[
\mathcal{PB}_{s}:=\sigma\left(\psi(t):t\in[0,s]\right),\quad s\in[0,\infty)
\]
generated by $\psi\in\mathfrak{P}(\mathbb{S})\mapsto\psi(t)\in\mathbb{S}$.
The measure $\mathcal{W}$ for $\mathfrak{P}(\mathbb{S})$ is called
the Wiener measure%
such that for a sequence $\{\psi(t_{i})\}_{t_{i}\leq t_{N}}=\{x_{t_{i}}\}_{t_{i}\leq t_{N}}$:
\begin{align*}
\mathcal{W}\left(\psi:x_{1}\in A_{t_{1}},\dots x_{t}\in A_{t_{N}}\right) & =\\
\int_{A_{t_{1}}}\cdots\int_{A_{t_{N}}}\frac{1}{\sqrt{2\pi(t_{1}-t_{0})}}e^{-\frac{(x_{1}-x_{0})^{2}}{2(t_{1}-t_{0})}}\cdots & \frac{1}{\sqrt{2\pi(t_{N}-t_{N-1})}}e^{-\frac{((x_{N}-x_{N-1})^{2}}{2(t_{N}-t_{N-1})}}dx_{t_{1}}\cdots dx_{t_{N}}.
\end{align*}
The measure is tight in the sense that, if $t-s<\epsilon$,
\[
\lim_{\epsilon\rightarrow0}\sup_{\psi\in\mathfrak{P}(\mathbb{S})}\sup_{0\leq s\leq t\leq T}\rho(\psi(t),\psi(s))=0
\]
for any metric $\rho(\cdot,\cdot)$. This is the Ascoli-Arzela criterion
for compact subsets.

We need to show that the invariance property of $\mathcal{W}$ is
a restatement of the independent identical increment property.

\emph{Identical}: Note that a function $f$ over $\psi$ will not
change the expression except that $\psi(t)$ is replaced by $f(\psi(t))$.
By Lemma 3.4.3 and Theorem 3.4.16 (Kolmogorov's Criterion) of \citet{Stroock2000},
we have that for a subset $\mu$ of all tight measures $\mathcal{M}(\mathfrak{P}(\mathbb{S}))$
and $\psi\in\mathfrak{P}(\mathbb{R})$:
\[
\sup_{\mu\in\mathcal{M}(\mathfrak{P}(\mathbb{S}))}\mathbb{E}_{\mu}\left[|\psi(t)-\psi(s)|^{r}\right]\leq C_{T}|t-s|^{1+\alpha},
\]
where $C_{T}<\infty$ is a constant, $\alpha>0$ and $r\geq1$. Then
we have
\[
\lim_{t\rightarrow s}\sup_{\psi\in\mathfrak{P}(\mathbb{S})}\frac{(\psi(t)-\psi(s))^{2}}{(t-s)}=\lim_{t\rightarrow s}\sup_{\psi\in\mathfrak{P}(\mathbb{S})}\left(\frac{\psi(t)-\psi(s)}{t-s}\right)^{2}(t-s)\rightarrow0.
\]
This means that the increments are controlled by the length of the time interval.
When the interval is extremely small, all increments are essentially treated
the same. So the smooth function $f$ does not matter for the law of $\mathcal{W}$.

\emph{Independent}: For $\psi$, $\varpi\in\mathfrak{P}(\mathbb{R})$,
let $\varpi(t)=\psi(t+s)-\psi(s)$. By the definition of the Wiener measure,
both $\psi(s)$ and $\varpi(t)$ associate with $\mathcal{W}$ on
the time path $[0,s]$ and $[0,t]$ respectively. Clearly, they are
independent.

Step 3. The reason why we are looking for a martingale representation
is in fact to look for a ``stochastic constant''. In the deterministic
case, suppose we define an integral curve of $\psi(\cdot)$ on a smooth
vector field $a$ on $\mathbb{R}$, starting at $x\in\mathbb{R}$.
Then the path $\psi$ with $\psi(0)=x$ has the property that
\[
f(\psi(t))-\int_{0}^{t}\langle a,\nabla_{x}f\rangle(\psi(\tau))d\tau
\]
is a \emph{constant}%
\footnote{The constant is the initial value $\psi(0)=x$ from the following
ODE problem:
\[
\frac{\partial f(\psi(t))}{\partial t}=\langle a,\nabla f\rangle(\psi(\tau)).
\]
}\emph{ }for any $f\in C^{\infty}$. If there is a stochastic analog,
then we can use this \emph{stochastic constant} to establish our approximating
model. The aim is to maintain a stable ``error''.

Recall the path space $\mathfrak{P}(\mathbb{S})$ and its $\sigma$-algebra
$\mathcal{PB}$. For an incremental element $\psi(t)-\psi(s)$ on $\mathfrak{P}(\mathbb{R})$,
the Fourier transform is:
\[
\mathbb{E}_{\mathcal{W}}\left[e^{i\xi(\psi(t)-\psi(s))}|\mathcal{PB}_{s}\right]=\int e^{i\xi x}\frac{1}{\sqrt{2\pi(t-s)}}e^{-x^{2}/2(t-s)}dx=e^{-\frac{|\xi|^{2}}{2}(t-s)}
\]
where $x=\varpi(t-s)=\psi(t)-\psi(s)$. What we want to obtain is a martingale
and a ``constant'' under $\mathcal{W}$. From the above equation,
it easy to see that we can obtain both of them simultaneously if we
shift the element $\exp i\xi\psi(t)$ by a Gaussian factor $\exp|\xi|^{2}t/2$:
\begin{align*}
\mathbb{E}_{\mathcal{W}}\left[e^{i\xi\psi(t)}e^{\frac{1}{2}|\xi|^{2}t}|\mathcal{PB}_{s}\right]= & e^{\frac{1}{2}|\xi|^{2}t}\mathbb{E}_{\mathcal{W}}\left[e^{i\xi\psi(t)-\psi(s)+\psi(s)}|\mathcal{PB}_{s}\right]\\
= & e^{\frac{1}{2}|\xi|^{2}t}e^{-\frac{|\xi|^{2}}{2}(t-s)}\mathbb{E}_{\mathcal{W}}\left[e^{i\xi\psi(s)}|\mathcal{PB}_{s}\right]\\
= & \mathbb{E}_{\mathcal{W}}\left[e^{i\xi\psi(s)}e^{\frac{1}{2}|\xi|^{2}s}|\mathcal{PB}_{s}\right]=1.
\end{align*}
Let a triplet denote this martingale on the Wiener path $\mathcal{W}$:
\begin{equation}
\left(\exp\left[i\xi\psi(t)+\frac{1}{2}|\xi|^{2}t\right],\mathcal{PB}_{t},\mathcal{W}\right).\label{eq:baseMartingale}
\end{equation}

We define the Fourier transform of $f$ by $\mathbb{F}f(\xi)=\int_{-\infty}^{\infty}f(x)e^{i\xi x}dx$,
and the inverse Fourier transform is $\mathbb{F}^{-1}f(\xi)=\int_{-\infty}^{\infty}f(x)e^{-i\xi x}dx$.

As in the deterministic case, the ideal representation of $f(t,\psi(t))$
on $\mathcal{W}$ is the path integral:
\[
\int_{0}^{t}\left[\nabla_{x}f+\frac{1}{2}\triangle_{x}f\right](\tau,\psi(\tau))d\tau.
\]
We need to check whether the approximation error is a ``constant''
in the stochastic sense. Note that
\[
f(t,x)=(2\pi)^{-1}\int e^{i(\xi t+\xi x)}(\mathbb{F}^{-1}f)d\xi d\eta.
\]
By the property $\mathbb{F}^{-1}(\frac{\partial}{\partial x})(\cdot)=i\xi\mathbb{F}^{-1}(\cdot)$,
we have
\[
\mathbb{F}^{-1}\left(\nabla_{x}f+\frac{1}{2}\triangle_{x}f\right)=\left(i\xi-\frac{1}{2}|\xi|^{2}\right)(\mathbb{F}^{-1}f).
\]
The approximating error is
\begin{align*}
f(t,\psi(t))- & \int_{0}^{t}\left[\nabla_{x}f+\frac{1}{2}\triangle f\right](\tau,\psi(\tau))d\tau\\
=(2\pi)^{-1}\int\int & \underset{M_{\xi}(t)}{\underbrace{\left[e^{i(\xi t+\xi\psi(t))}-\int_{0}^{t}e^{i(\xi\tau+\xi\psi(\tau))}(i\xi-\frac{1}{2}|\xi|^{2})d\tau\right]}}(\mathbb{F}^{-1}f)d\xi d\eta
\end{align*}
The Fourier term $\mathbb{F}^{-1}f$ is bounded and irrelevant for
$\mathcal{W}$. If $M_{\xi}(t)$ is a martingale in $\mathcal{W}$,
then the error will be a stochastic constant. Rewrite $M_{\xi}(t)$
as:
\[
M_{\xi}(t)=e^{i\xi t}e^{i\xi x}-\int_{0}^{t}e^{i\xi\psi(\tau)}e^{i\xi\tau}d(i\xi-\frac{1}{2}|\xi|^{2})\tau.
\]
The second term can be written as
\[
\int_{0}^{t}e^{i\xi\psi(\tau)+\frac{1}{2}|\xi|^{2}\tau}d(e^{i\xi\tau}\cdot e^{-\frac{1}{2}|\xi|^{2}\tau})
\]
and the first term can be written as $e^{i\xi t-\frac{1}{2}|\xi|^{2}t}e^{i\xi\psi(t)+\frac{1}{2}|\xi|^{2}t}$.
Fubini's Lemma together with (\ref{eq:baseMartingale}) implies that
\[
\mathbb{E}_{\mathcal{W}}\left[M_{\xi}(t)|\mathcal{PB}_{s}\right]=1\cdot\mathbb{E}_{\mathcal{W}}\left[e^{i\xi t-\frac{1}{2}|\xi|^{2}t}-\int_{0}^{t}d(e^{i\xi\tau-\frac{1}{2}|\xi|^{2}\tau})d\tau|\mathcal{PB}_{s}\right]=1.
\]
Thus $\left(f(t,\psi)-\int_{0}^{t}\left[\nabla_{x}f+\frac{1}{2}\triangle_{x}f\right](\tau,\psi)d\tau,\mathcal{PB}_{t},\mathcal{W}\right)$
is a martingale.

Now we consider the general case in (\ref{eq:heatequation}). If the state moves with velocity $a(X_{t})$, the path derivative
becomes $a(\cdot)\nabla f$. Moreover, the Laplace operator $\triangle$
in the heat equation (\ref{eq:heatequation}) may be associated with a volatility coefficient $b(\cdot)$.
Then the approximating model is given by
\[
\int_{0}^{t}\left[a(X_{s})\nabla_{x}f+\frac{1}{2}b(X_{s})\triangle_{x}f\right]ds,
\]
which is the integral of the Feller generator $\mathbb{A}$ on $f$:
\[
\mathbb{A}:=a(\cdot)\nabla_{x}+\frac{1}{2}b(\cdot)\triangle_{x}.
\]
The generator is a dual representation of a diffusion process $(a,b)$
such that
\[
dX_{t}=a(X_{t})dt+\sigma(X_{t})dV_{t}
\]
where $b(X_{t})=\sigma(X_{t})^{T}\sigma(X_{t})$ and $V_{t}$ is a
Wiener process.

Step 4. Since the martingale with initial condition $\mathcal{W}(\psi(0)=x)=1$
completely characterizes $\mathcal{W}$, the above result can be extended
to any $\mathbb{P}$ by the \emph{Principle of Accompanying Laws}
and \emph{Donsker's Invariance Principle} \citep[Theorem 3.1.14 and 3.4.20,][]{Stroock2000}
if and only if $\mathbb{P}$ belongs to the family of all tight measures,
$\mathcal{M}(\mathfrak{P}(\mathbb{S}))$. In our setup, $\mathbb{S}$
is a compact metric space so the collection of $\mathbb{P}(\cdot)$
over $\mathcal{S}$ is tight. The Principle of Accompanying Laws says
that if a sequence is in a complete separable space with tight measure, the
law of this sequence will weakly converge. Donsker's Invariance Principle
says that for independent increment processes, the convergent law is the
law of the Wiener process. Therefore, $\mathbb{P}\in\mathcal{M}(\mathfrak{P}(\mathbb{S}))$
and
\[
\left(f(X_{t})-f(X_{0})-\int_{0}^{t}(\mathbb{A}f)dt,\mathcal{PB}_{t},\mathbb{P}\right)
\]
is a martingale.

The IF claim says that a martingale exists for $f(t,X_{t})$ on $(\Omega,\mathcal{F},\mathbb{P})$.
The maximum principle restricts the process to be $\mathcal{PB}_{t}$-adapted,
thus $\mathcal{F}\sim\mathcal{PB}$ and the result holds on $(\mathbb{S},\mathcal{S})$
with the probability space $(\Omega,\mathcal{F},\mathbb{P})$.
\end{proof}

\subsubsection{Proof of Theorem \ref{thm:riskfree}}
\begin{proof}
(i) Part of the proof follows by Propositions 3.13 and 3.15 in \cite{BainCrisan2008}. The boundedness condition
\[
\mathbb{E}\left[\exp\left(\frac{1}{2}\int h(X_{s})^{2}ds\right)\right]<\infty,
\]
 is called Novikov's condition. By this condition, Girsanov's theorem
implies that $Z_{t}$ defined as
\[
\left.\frac{d\tilde{\mathbb{P}}}{d\mathbb{P}}\right|_{\mathcal{F}_{t}}=Z_{t}:=\exp\left(-\int_{0}^{t}h(X_{s})dW_{s}-\frac{1}{2}\int_{0}^{t}h(X_{s})^{2}ds\right)
\]
is an $\mathcal{F}_{t}$-adapted martingale. The Martingale representation
theorem implies that
\[
W_{t}+\left\langle \int_{0}^{t}h(X_{s})dW_{s},W_{t}\right\rangle _{t}=W_{t}+\int_{0}^{t}h(X_{s})ds=Y_{t},
\]
 where $\langle\cdot,\cdot\rangle_{t}$ is the quadratic variation
such that $\langle W_{t},W_{t}\rangle_{t}=t$. Thus, for $d\tilde{\mathbb{P}}=Z_{t}d\mathbb{P}$,
$Y_{t}$ is a Wiener process with respect to $\tilde{\mathbb{P}}$:
\begin{align*}
\mathbb{E} & e^{\left(W_{t}+\int_{0}^{t}h(X_{s})ds\right)}e^{\left(-\int_{0}^{t}h(X_{s})dW_{s}-\frac{1}{2}\int_{0}^{t}h(X_{s})^{2}ds\right)}\\
=\mathbb{E} & e^{\left\{ \int_{0}^{t}(1+h(X_{s}))dW_{s}-\int_{0}^{t}(2h(X_{s})+h^{2}(X_{s}))ds\right\} }\\
=\mathbb{E} & e^{t^{2}/2}\cdot e^{\left\{ \int_{0}^{t}(1+h(X_{s}))dW_{s}-\int_{0}^{t}(1+h(X_{s}))^{2}ds\right\} }=e^{t^{2}/2}.
\end{align*}
 The last line is the result of (\ref{eq:baseMartingale}).

The law of the pair process $(X,Y)$ can be written as
\[
(X_{t},Y_{t})=(X_{t},W_{t})+\left(0,\int_{0}^{t}h(X_{s})ds\right).
\]
Thus, on an arbitrary time interval $[0,t]$, under the $\tilde{\mathbb{P}}$-law,
the law of $(X_{t},W_{t})$ is absolutely continuous with respect
to the law of the pair process $(X_{t},Y_{t})$. For any bounded measurable
function $\varphi$ defined on the product path space of $(X,Y)$,
we have
\[
\tilde{\mathbb{E}}\left[\varphi(X_{t},Y_{t})\right]=\mathbb{E}\left[\varphi(X_{t},Y_{t})Z_{t}\right]=\mathbb{E}\left[\varphi(X_{t},W_{t})\right].
\]
 Therefore, $X$ and $Y$ are independent under $\tilde{\mathbb{P}}$
since $X$ and $W$ are independent.

(ii) Under the probability measure $\tilde{\mathbb{P}}$, the law
of the process $Y$ is completely specified as an $\mathcal{F}_{t}$-adapted
Wiener process with independent increments of $Y$. Hence, the $\sigma$-algebra
is $\mathcal{Y}_{t}^{\dagger}=\sigma(Y_{t+u}-Y_{t})$ for any $u\geq0$.
Note that $\mathcal{Y}_{t}$ and $\mathcal{Y}_{t}^{\dagger}$ are
independent. By the conditional expectation property,
\[
\tilde{\mathbb{E}}\left[\varphi(X_{t})|\mathcal{Y}_{t}\right]=\tilde{\mathbb{E}}\left[\varphi(X_{t})|\sigma(\mathcal{Y}_{t},\mathcal{Y}_{t}^{\dagger})\right].
\]
 Since $\mathcal{Y}_{t}^{\dagger}$ includes all the incremental information
after time $t$,
\[
\sigma(\mathcal{Y}_{t},\mathcal{Y}_{t}^{\dagger})=\mathcal{Y}_{t}\vee\mathcal{Y}_{(t'-t)\in\mathbb{R}}=\mathcal{Y},
\]
 and $\mathcal{Y}$ is a time invariant $\sigma$-algebra.
\end{proof}

\subsubsection{Proof of Theorem \ref{thm:KSP}}
The proof follows the results given in \cite{Rozovsky1991}.
First we give the Zakai equation, and then we show that KSP is a normalized Zakai equation.
\begin{prop}
\label{pro:Zakai} (Zakai Equation) If $\tilde{\mathbb{E}}[\varphi(X_{t})Z_{t}|\mathcal{Y}_{t}]$
is bounded under $\tilde{\mathbb{P}}$, where
\[
Z_{t}=\exp\left(-\int_{0}^{t}h(X_{s})dW_{s}-\frac{1}{2}\int_{0}^{t}h(X_{s})^{2}ds\right),
\]
 then for any $\varphi\in C_{b}(\mathbb{S})$ the process $\rho_{t}(\varphi):=\tilde{\mathbb{E}}[\varphi(X_{t})Z_{t}|\mathcal{Y}_{t}]$
follows
\[
\rho_{t}(\varphi)=\pi_{0}(\varphi)+\int_{0}^{t}\rho_{s}(\mathbb{A}\varphi)ds+\int_{0}^{t}\rho_{s}(\varphi h)dY_{s}
\]
 on $\tilde{\mathbb{P}}$ almost surely.
\end{prop}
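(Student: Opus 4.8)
The plan is to derive this linear stochastic equation for the Kallianpur--Striebel weight $\rho_t$ by working entirely under the reference measure $\tilde{\mathbb{P}}$ supplied by Theorem~\ref{thm:riskfree}. Under $\tilde{\mathbb{P}}$ the observation $Y$ is a Brownian motion independent of $X$, the process $X$ keeps its $\mathbb{P}$-law and hence still solves the martingale problem for $\mathbb{A}$, and the conditioning is time-invariant: $\tilde{\mathbb{E}}[\,\cdot\mid\mathcal{Y}_s]=\tilde{\mathbb{E}}[\,\cdot\mid\mathcal{Y}_t]$ for $\mathcal{F}_s$-measurable integrands whenever $s\leq t$. First I would re-express $Z_t$ as an exponential functional of $Y$ through $dW_s=dY_s-h(X_s)\,ds$; this exhibits $Z_t$ as the exponential $\tilde{\mathbb{P}}$-martingale solving $dZ_t=Z_t\,h(X_t)\,dY_t$ with $Z_0=1$. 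The hypothesis that $\tilde{\mathbb{E}}[\varphi(X_t)Z_t\mid\mathcal{Y}_t]$ be bounded, together with boundedness of $\varphi$, $\mathbb{A}\varphi$ and $h$, is precisely what makes $\rho_{\cdot}(\cdot)$ well defined and turns the stochastic integrals below into genuine (not merely local) martingales, so that the optional-projection manipulations are justified.

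The heart of the argument is It\^o's product rule for $\varphi(X_t)Z_t$ under $\tilde{\mathbb{P}}$. Put $M^{\varphi}_t:=\varphi(X_t)-\varphi(X_0)-\int_0^t\mathbb{A}\varphi(X_s)\,ds$, which is a $\tilde{\mathbb{P}}$-martingale by Corollary~\ref{cor:DiffusionCoeff} (its $\tilde{\mathbb{P}}$-law agrees with its $\mathbb{P}$-law by Theorem~\ref{thm:riskfree}(i), and it stays a martingale for the enlarged filtration because $X\perp Y$). Since the martingale part of $\varphi(X)$ is carried by the noise of $X$ and that of $Z$ by $Y$, independence of $X$ and $Y$ forces $\langle\varphi(X),Z\rangle\equiv0$, and the product rule gives
\[
d\bigl(\varphi(X_t)Z_t\bigr)=Z_t\,\mathbb{A}\varphi(X_t)\,dt+Z_t\,dM^{\varphi}_t+Z_t\,\varphi(X_t)h(X_t)\,dY_t .
\]
Applying $\tilde{\mathbb{E}}[\,\cdot\mid\mathcal{Y}_t]$ to the integrated form produces the three terms of the claim: (a) the $dt$-term, by the conditional Fubini theorem and the time-invariance above, becomes $\int_0^t\rho_s(\mathbb{A}\varphi)\,ds$; (b) $N_t:=\int_0^tZ_s\,dM^{\varphi}_s$ is a $\tilde{\mathbb{P}}$-martingale with $\langle N,Y\rangle\equiv0$, hence $\tilde{\mathbb{E}}[N_t\mid\mathcal{Y}_t]=0$; (c) the $dY$-term, because the optional projection onto the $\mathcal{Y}$-filtration commutes with integration against the $(\tilde{\mathbb{P}},\mathcal{Y})$-Brownian motion $Y$, becomes $\int_0^t\tilde{\mathbb{E}}\bigl[Z_s\varphi(X_s)h(X_s)\mid\mathcal{Y}_s\bigr]\,dY_s=\int_0^t\rho_s(\varphi h)\,dY_s$. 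Combined with $\rho_0(\varphi)=\tilde{\mathbb{E}}[\varphi(X_0)]=\pi_0(\varphi)$ this is the asserted identity, holding $\tilde{\mathbb{P}}$-a.s.\ after passing to the continuous modification of $\rho_{\cdot}(\varphi)$.

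The genuinely delicate part is (b)--(c): making rigorous that conditioning on $\mathcal{Y}_t$ annihilates the $X$-driven martingale while projecting the $dY$-stochastic integral correctly. The cleanest route, following \citet{Rozovsky1991} and \citet[Prop.\ 3.13 and 3.15]{BainCrisan2008}, is to test against the total family of bounded $\mathcal{Y}_t$-measurable exponential functionals $G_t=\exp\bigl(i\int_0^t r_u\,dY_u+\tfrac{1}{2}\int_0^t r_u^2\,du\bigr)$, which are exponential $(\tilde{\mathbb{P}},\mathcal{Y})$-martingales with $dG_t=i\,r_t\,G_t\,dY_t$: since $\langle M^{\varphi},Y\rangle\equiv0$ one gets $\langle N,G\rangle\equiv0$, so $N_tG_t$ is a martingale null at $0$ and $\tilde{\mathbb{E}}[N_tG_t]=0$; and for $L_t:=\int_0^tZ_s\varphi(X_s)h(X_s)\,dY_s$ one has $\tilde{\mathbb{E}}[L_tG_t]=\tilde{\mathbb{E}}[\langle L,G\rangle_t]$ with $d\langle L,G\rangle_s$ built from the $\mathcal{Y}$-adapted differential $d\langle Y,G\rangle_s=i\,r_s\,G_s\,ds$, so conditioning on $\mathcal{Y}_s$ replaces $Z_s\varphi(X_s)h(X_s)$ by $\rho_s(\varphi h)$ and reproduces $\tilde{\mathbb{E}}\bigl[\bigl(\int_0^t\rho_s(\varphi h)\,dY_s\bigr)G_t\bigr]$ after undoing the test. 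A routine subsidiary point is the uniform-on-$[0,t]$ square integrability of the integrands $Z_s$ and $Z_s\varphi(X_s)h(X_s)$, which the boundedness hypothesis on $\rho_t$ provides once combined with boundedness of $\varphi$ and $h$. Normalizing $\pi_t(\varphi)=\rho_t(\varphi)/\rho_t(1)$ and one further application of It\^o then turns this Zakai equation into the KSP equation of Theorem~\ref{thm:KSP}, but that is a separate step.
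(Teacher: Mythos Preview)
Your proposal is correct and follows essentially the same route as the paper: rewrite $Z_t$ as a $\tilde{\mathbb{P}}$-exponential martingale driven by $Y$, apply the It\^o product rule to $\varphi(X_t)Z_t$, and then condition on $\mathcal{Y}_t$. The paper compresses the last step into a single line invoking ``Fubini's theorem and It\^o's lemma,'' whereas you spell out the delicate parts---why the $X$-driven martingale increment has zero $\mathcal{Y}_t$-conditional expectation and why the optional projection commutes with the $dY$-integral---via the standard device of testing against exponential $\mathcal{Y}$-martingales, which is precisely the machinery in the Bain--Crisan and Rozovskii references the paper cites.
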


\begin{proof}
Note that if $Z_{t}$ is a $\tilde{\mathbb{P}}$-martingale, then
\[
Z_{t}=\exp\left(-\int_{0}^{t}h(X_{s})dY_{s}-\frac{1}{2}\int_{0}^{t}h(X_{s})^{2}ds\right)
\]
 since $Y_{t}$ is a Wiener process under $\tilde{\mathbb{P}}$. By
Girsanov's theorem
\[
Z_{t}=1+\int_{0}^{t}Z_{t}h(X_{s})dY_{t}.
\]
 Because $\rho_{t}(\varphi)$ is bounded, Fubini's theorem and It\^o's
lemma imply
\begin{equation}
d \rho_{t}(\varphi) =  d\tilde{\mathbb{E}}[\varphi(X_{t})Z_{t}|\mathcal{Y}_{t}]=\tilde{\mathbb{E}}[\mathbb{A}\varphi(X_{t})Z_{t}|\mathcal{Y}_{t}]dt+\tilde{\mathbb{E}}[\varphi(X_{t})h(X_{s})Z_{t}|\mathcal{Y}_{t}]dY_{t}.\label{eq:unormalizedZakai}
\end{equation}
 Taking the integral, we have the result.
\end{proof}

We now turn to the proof of Theorem \ref{thm:KSP}.
\begin{proof}
If a new measure is constructed under a Wiener process $Y$, then
$\pi$ has a representation in terms of $\rho$ by Bayes' rule such
that
\begin{equation}
\pi_{t}(\varphi)=\frac{\rho_{t}(\varphi)}{\tilde{\mathbb{E}}[Z_{t}|\mathcal{Y}_{t}]}=\frac{\rho_{t}(\varphi)}{\exp\left(\int\pi_{s}(h)dY_{s}-\frac{1}{2}\int_{0}^{t}[\pi_{s}(h)]^{2}ds\right)}.\label{eq:changemeasure}
\end{equation}
Since $\rho_{t}(\cdot)$ satisfies a linear evolution equation, we
expect this will lead to an evolution equation for $\pi$. From equation
(\ref{eq:changemeasure}), we have
\begin{equation}
d\left(\frac{1}{\tilde{\mathbb{E}}[Z_{t}|\mathcal{Y}_{t}]}\right)=\frac{1}{\tilde{\mathbb{E}}[Z_{t}|\mathcal{Y}_{t}]}\left(\int\pi_{s}(h)dY_{s}-\frac{1}{2}\int_{0}^{t}[\pi_{s}(h)]^{2}ds\right)\label{eq:KSPproof}
\end{equation}
 which is equivalent to
\[
\pi_{t}(\varphi)=\rho_{t}(\varphi)\cdot\frac{1}{\tilde{\mathbb{E}}[Z_{t}|\mathcal{Y}_{t}]}.
\]
 Note that integration by parts implies
\[
\rho_{t}(\varphi)\cdot\frac{1}{\tilde{\mathbb{E}}[Z_{t}|\mathcal{Y}_{t}]}=\int\frac{1}{\tilde{\mathbb{E}}[Z_{t}|\mathcal{Y}_{t}]}d\rho_{t}(\varphi)+\int\rho_{t}(\varphi)d\left(\frac{1}{\tilde{\mathbb{E}}[Z_{t}|\mathcal{Y}_{t}]}\right).
\]
 Substituting equation (\ref{eq:unormalizedZakai}) for $\rho_{t}(\varphi)$ and (\ref{eq:KSPproof}) for $d (1 / \tilde{\mathbb{E}}[Z_{t}|\mathcal{Y}_{t}] )$, the result follows. \end{proof}

\subsection{Proof of Other Results}

\subsubsection{Proof of Corollary \ref{cor:master-eq}}
\begin{proof}
Sketch of the proof. Take the transition probability $\mathbb{Q}_{\tau'}$
and expand it w.r.t. $\tau'$ at zero by Taylor's expansion:
\begin{equation}
\mathbb{Q}_{\tau'}(X_{u}|X_{s})=\delta(X_{u}-X_{s})+\tau'\mathcal{W}(X_{u}|X_{s})+o(\tau'),\label{eq:TaylorMarkov}
\end{equation}
where $\delta(\cdot)$ is the delta function%
\footnote{Loosely speaking, delta function is a smooth indicator function such
that the derivative of $\delta(\cdot)$ exists in the weak sense.
Regardless of technical differences, one can think both of them are
identical.%
}. The function $\mathcal{W}(X_{u}|X_{s})$ is the time derivative
of the transition probability at $\tau=0$, called \emph{transition
probability per unit time}. This expression must satisfy the normalization
property, in other words, the integral over $X_{u}$ must equal one.
For this purpose, the above form can be corrected to:
\[
\mathbb{Q}_{\tau'}(X_{u}|X_{s})=(1-\alpha_{0}\tau')\delta(X_{u}-X_{s})+\tau'\mathcal{W}(X_{u}|X_{s})+o(\tau'),
\]
where $\alpha_{0}(X_{s})=\int\mathcal{W}(dX_{u}|X_{s})$. Substituting
the expansion form into Chapman-Kolmogorov equation
\[
\mathbb{Q}_{\tau+\tau'}(X_{u}|X_{s})=(1-\alpha_{0}\tau')\mathbb{Q}_\tau(X_{u}|X_{s})+\tau'\int\mathcal{W}(X_{u}|X_{t})\mathbb{Q}_{\tau}(dX_t|X_s),
\]
then dividing the
equation by $\tau'$, substituting $\alpha_{0}(X_s)$ and letting $\tau'$ go to zero give us
the following result
\[
\frac{\partial}{\partial\tau}\mathbb{Q}_{\tau}(X_{u}|X_{s})=\int\left\{ \mathcal{W}(X_{u}|X_{t})\mathbb{Q}_{\tau}(dX_{t}|X_{s})-\mathcal{W}(dX_{t}|X_{u})\mathbb{Q}_{\tau}(X_{u}|X_{s})\right\}.
\]
This derivation is described in \citet[Chapter 5,][]{Kampen2007}.
\end{proof}

\subsubsection{Proof of Corollary \ref{cor:DiffusionCoeff}}
\begin{proof}
For the first part, the IF claim says that a martingale exists for
$f(t,X_{t})$ on $(\Omega,\mathcal{F},\mathbb{P})$. The maximum principle
restricts the process to be $\mathcal{PB}_{t}$-adapted, thus $\mathcal{F}\sim\mathcal{PB}$
and the result holds on $(\mathbb{S},\mathcal{S})$ with the probability
space $(\Omega,\mathcal{F},\mathbb{P})$. The second part is a standard
result of diffusion processes.
\end{proof}

{\footnotesize{}

}{\footnotesize \par}

\end{document}